%
\documentclass[a4paper,12pt]{amsart}
\usepackage{amsthm,amsmath,amssymb}
\usepackage{graphicx}
\usepackage{paralist}
\usepackage[notext]{stix2}
\usepackage[arrow]{xy}
\usepackage{pb-diagram,pb-xy}
\usepackage{curves}
\usepackage{xcolor}
\addtolength{\voffset}{-.38in}
\addtolength{\hoffset}{-.55in}
\addtolength{\textheight}{.20in}
\addtolength{\textwidth}{1.5in}
\addtolength{\evensidemargin}{-.3in}
\addtolength{\textheight}{.4in}
\allowdisplaybreaks[4]
\theoremstyle{plain}
  \newtheorem{thm}{Theorem}[section]
  \newtheorem{lem}[thm]{Lemma}
  \newtheorem{prop}[thm]{Proposition}
  \newtheorem{cor}[thm]{Corollary}
\theoremstyle{definition}
  \newtheorem{defn}[thm]{Defninition}
  \newtheorem{expl}[thm]{Example}
\theoremstyle{remark}
  \newtheorem{rem}[thm]{Remark}
\numberwithin{equation}{section}

\def\numeric{\mathbb{N}}

\def\real{\mathbb{R}}

\def\I{\mathbb{I}}
\def\art{\text{art.}}
\def\homeo{\approx}
\def\fracinline#1/#2{\mbox{\raise0.5ex\hbox{\footnotesize$#1$}{\footnotesize\hskip-.1em$/$\hskip-.1em}\raise-0.5ex\hbox{\footnotesize$#2$}}}
\def\timesab#1#2{ \,{}_{#1\!\!} \times_{\,#2} }
\def\colim{\operatorname{colim}}
\def\comp{\smash{\lower-.1ex\hbox{\scriptsize$\circ$\,}}}
\def\proj{\mathrm{pr}}

\def\id{\mathrm{1}}
\def\join{\,\ast\,}

\def\hooklongrightarrow{\lhook\joinrel\longrightarrow}

\def\der#1by#2{\frac{\operatorname{\mathit{d}}\hspace{-0.1mm}#1}{\operatorname{\mathit{d}}\hspace{-0.1mm}#2}}
\def\nder#1by#2times#3{\frac{\operatorname{\mathit{d}}^{#3}\hspace{-0.2mm}#1}{\operatorname{\mathit{d}}\hspace{-0.4mm}{#2\,}^{#3}}}
\def\pder#1by#2{\frac{\operatorname{\partial}\hspace{-0.1mm}#1}{\operatorname{\partial}\hspace{-0.1mm}#2}}
\def\npder#1by#2times#3{\frac{\operatorname{{\partial\,}^{#3}}\hspace{-0.2mm}#1}{\operatorname{\partial}\hspace{-0.1mm}{#2}^{#3}}}
\def\diff#1{\ifx#1(\operatorname{\mathit{d}}\hspace{0.25mm}#1\else\operatorname{\mathit{d}}\hspace{-0.5mm}#1\fi}
\def\pdiff#1{\ifx#1(\operatorname{\partial}\hspace{0.25mm}#1\else\operatorname{\partial}\hspace{-0.25mm}#1\fi}
\def\emptyarg{}

\def\Img#1{\operatorname{Im}\hskip1.5pt(\hskip1pt#1)}
\def\midvert{ \,\,\mathstrut\vrule\,\, }
\def\ad#1{\def\thisarg{#1}\mathrm{ad}\ifx\thisarg\emptyarg\else(#1)\fi}
\def\const#1{\def\thisarg{#1}\operatorname{\iota}\ifx\thisarg\emptyarg\else(\hskip1pt#1)\fi}
\def\Path#1{\def\thisarg{#1}\ifx\thisarg\emptyarg{\operatorname{\mathcal{P}}}\else{\operatorname{\mathcal{P}}(#1)}\fi}
\def\Category#1{{\sf{#1}}}
\def\Site#1{{\sf{#1}}}
\def\Object#1{\operatorname{Obj}\,(\text{\small\(#1\)})}
\def\Morphism#1{\operatorname{Mor}_{\text{\small\,\(#1\)\,}}}
\def\Covering#1{\operatorname{Cov}_{\text{\small\,\(#1\)\,}}}
\def\manifold{\Category{Manifold}}
\def\diffeology{\Category{Diffeology}}
\def\topology{\Category{Topology}}

\def\chen{\Category{Chen}}
\def\sets{\Site{Set}}
\def\euclidean#1{\def\thisarg{#1}\ifx\thisarg\emptyarg{\Site{Euclidean}}\else{\Site{C^{#1}\text{-}Euclidean}}\fi}
\def\domain#1{\def\thisarg{#1}\ifx\thisarg\emptyarg{\Site{Domain}}\else{\Site{C^{#1}\text{-}Domain}}\fi}
\def\convex#1{\def\thisarg{#1}\Site{\ifx\thisarg\emptyarg\else{C^{r}\text{-}}\fi{Convex}}}
\def\polyhedron#1{\def\thisarg{#1}\Site{\ifx\thisarg\emptyarg\else{C^{r}\text{-}}\fi{Polyhedron}}}

\def\homeo{\approx}

\def\Loop#1{\operatorname{\mathcal{L}}(#1)}
\def\Paths{\operatorname{Paths}}
\def\stPaths{\operatorname{stPaths}}
\def\Map{\operatorname{Map}}
\def\Img{\operatorname{Im}}

\def\ast{\hbox{\footnotesize$*$}}
\def\differentiabletxt{smooth}
\def\smoothtxt{smooth}
\def\Smoothtxt{Smooth}
\def\Cubic{C}
\def\cubictxt{cubic}
\def\Cubictxt{Cubic}
\def\cubicsettxt{cubic set}

\def\cubicsetstxt{cubic sets}
\newenvironment{enumerate*}{\vskip.5ex
\begin{inparaenum}[(1)\hspace{.2em}]
}{\end{inparaenum}\vskip.5ex\noindent}
\def\vitem{\par\hskip1em\item}
\def\hitem{\hskip1em\item}
\begin{document}
\ifdefined\expand
\baselineskip21pt
\else
\ifdefined\narrow
\baselineskip15pt
\else
\baselineskip18pt
\fi
\fi
%
%
\title{Smooth $A_{\infty}$-form on a diffeological loop space}
%
%
\author{Norio IWASE}
\email{iwase@math.kyushu-u.ac.jp\vskip-1ex}
%
%
\address
{Faculty of Mathematics,
 Kyushu University,
 Fukuoka 819-0395, Japan\vskip-1ex}
%
%
\keywords{Diffeology, homotopy associativity, $A_{\infty}$ structure}
%
%
\subjclass[2010]{Primary 58A40, Secondary 58A03, 58A05, 57N60}
%
%
\begin{abstract}
To construct an $A_{\infty}$-form for a loop space in the category of diffeological spaces, we have two minor problems. Firstly, the concatenation of paths in the category of diffeological spaces needs a small technical trick (see P.~I-Zemmour \cite{MR3025051}), which apparently restricts the number of iterations of concatenations. Secondly, we do not know a natural smooth decomposition of an associahedron as a simplicial or a cubical complex.
To resolve these difficulties, we introduce a notion of a $q$-cubic set which enjoys good properties on dimensions and representabilities, and show, using it, that the smooth loop space of a reflexive diffeological space is a h-unital smooth $A_{\infty}$-space.
In appendix, we show an alternative solution by modifying the concatenation to be stable without assuming reflexivity for spaces nor stability for paths.
\end{abstract}
%
%
\maketitle

\section*{Introduction}

A site is a concrete category with a `coverage' assigning a `covering family' to each object.
For a site $\Site{C}$, we denote by $\Object{\Site{C}}$ the class of objects, by $\Morphism{\Site{C}}(A,B)$ the set of morphisms from $A$ to $B$, and by $\Covering{\Site{C}}(U)$ the set of covering families on $U \in \Object{\Site{C}}$.
We denote by $\sets$ the category of sets and maps between sets.
For a given set $X$, we have two contravariant functors $\mathcal{M}_{X}, \,\mathcal{K}_{X} : \Site{C} \to \sets$ defined by
\begin{enumerate}
\item
$\mathcal{M}_{X}(U) = \Map(U,X)$ the set of maps from $U$ to $X$ and
\vspace{.5ex}\item
$\mathcal{K}_{X}(U) = \{\,P \in \mathcal{M}_{X}(U) \midvert \text{$P : U \to X$ is locally constant}\,\}$,
\end{enumerate}
where we say $P : U \to X$ is locally constant, if there exists a covering family $\{\,g_{\alpha} : V_{\alpha} \to U\,\}_{\alpha \in \Lambda}$ of $U$ such that $P{\comp}g_{\alpha}$ is constant for any $\alpha \in \Lambda$.

In \cite{MR380859,MR377960,MR454968,MR842915}, K.~T.~Chen introduced a site $\convex{}$ which is a category of convex sets with non-void interiors in $\real^{n}$ for some $n \!\ge\! 0$, and {\smoothtxt} functions between them in the ordinary sense (see \cite{MR1471480}), with a `coverage' assigning a `covering family' to each convex set with non-void interior, which is the set of open coverings by interiors of convex sets.

In \cite{MR607688}, J.~M.~Souriau introduced a similar but a slightly more sophisticated site $\domain{}$ which is a category of open sets in $\real^{n}$ for some $n \!\ge\! 0$, and {\smoothtxt} functions between them in the ordinary sense, with a `coverage' assigning a `covering family' to each open set, which is the set of open coverings in the usual sense.

We call a pair $(X,\mathcal{D}_{X})$ a diffeological space, if it satisfies the following conditions.
\begin{enumerate}
\item[(D1)]\label{defn:smoothology'-1}
$X$ is a set and $\mathcal{D}_{X} : \domain{} \to \sets$ is a contravariant functor.
\vspace{.5ex}\item[(D2)]\label{defn:smoothology'-2}
For any $U \in \Object{\domain{}}$, $\mathcal{K}_{X}(U)$ $\subset$ $\mathcal{D}_{X}(U)$ $\subset$ $\mathcal{M}_{X}(U)$.
\vspace{.5ex}\item[(D3)]\label{defn:smoothology'-3}
For any $U \!\in\! \Object{\domain{}}$ and any $P \in \mathcal{M}_{X}(U)$, 
$P \in \mathcal{D}_{X}(U)$ if there exists $\{U_{\alpha}\}_{\alpha\in\Lambda} \in \Covering{\domain{}}(U)$ such that $P\vert_{U_{\alpha}} \in \mathcal{D}_{X}(U_{\alpha})$ for all $\alpha \in \Lambda$.
\end{enumerate}
A map $f : X \to Y$ is said to be {\differentiabletxt}, if 
the natural transformation $f_{\!\ast} : \mathcal{M}_{X} \to \mathcal{M}_{Y}$ satisfies $f_{\!\ast}(\mathcal{D}_{X}(U)) \subset \mathcal{D}_{Y}(U)$ for any $U \!\in\! \Object{\domain{}}$.
We denote by $\diffeology$, the category of diffeological spaces and {\differentiabletxt} maps between diffeological spaces.
An element of $\mathcal{D}_{X}(U)$ is called a plot of $X$ on $U$, and \vspace{.5ex}$\mathcal{D} = \bigcup_{U \in \Object{\domain{}}}\mathcal{D}_{X}(U)$ is called a `diffeology' on $X$.
If we replace the site $\domain{}$ by the site $\convex{}$, we obtain Chen's smooth category denoted by $\chen$.
From now on, we discuss in the smooth category $\diffeology{}$, rather than $\chen{}$, while we believe that entirely similar arguments can be performed also in $\chen{}$.
Let $\numeric$ be the set of non-negative integers.

\begin{rem}
For any set $X$, both $\mathcal{K}_{X}$ and $\mathcal{M}_{X}$ give diffeologies on $X$.
In fact, $\mathcal{K}_{X}$ gives the finest diffeology on $X$ and $\mathcal{M}_{X}$ gives the coarsest diffeology on $X$ (see \cite[1.18]{MR3025051}).
\end{rem}

\begin{rem}
Let $\euclidean{}$ be the full-subcategory of $\domain{}$ consisting of all Euclidean spaces of some dimension $\in \numeric$.
Even if we replace $\domain{}$ with $\euclidean{}$ in the definition of $\diffeology$, we recover $\diffeology$ itself (see \cite[Exercise 3]{MR3025051}).
\end{rem}
\begin{rem}
For $n \!\in\! \numeric$, let $\euclidean{}_{n}$ be the full-subcategory of $\euclidean{}$ consisting of all Euclidean spaces of dimension up to $n$.
If we replace $\domain{}$ with $\euclidean{}_{0}$ in the definition of $\diffeology$, we must obtain $\sets$ the category of sets.
If we replace $\domain{}$ with $\euclidean{}_{1}$ in the definition of $\diffeology$, we must obtain the category of diffeological spaces with `wire-diffeology' (see \cite[\art 1.10]{MR3025051}).
\end{rem}

In this paper, a manifold is assumed to be paracompact.
We denote by $\manifold$ the category of {\smoothtxt} manifolds and {\smoothtxt} maps between them which can be embedded into $\diffeology{}$ as a full subcategory (see \cite{MR3025051}).
One of the advantage to expand our playground to $\diffeology{}$ than to restrict ourselves in $\manifold$ is that the category $\diffeology{}$ is cartesian-closed, complete and cocomplete (see \cite{MR3025051}).

The path space in $\diffeology{}$ is defined using the real line $\real$ in place of the closed interval $[0,1]$ (see \cite[Chapter 5]{MR3025051}).
This definition gives a nice diffeology on a path space, while it causes a technical issue on concatenation:
$$
\Paths(X)=C^{\infty}(\real,X) \ \text{($= \mathcal{D}_{X}(\real)$ as a set)}
$$
A work-around can easily be found as in \cite[\art 5.4]{MR3025051} by compressing the moving part into an open subinterval $(\varepsilon,1{-}\varepsilon) \subset (0,1) \subset \real$, where $0 < \varepsilon \ll 1$:
\begin{align*}&
\stPaths_{\varepsilon}(X)=\{\, u \in \Paths(X) \midvert \forall\,{t \!\le\! \varepsilon}\, \,u(t)=u(0) \ \& \ \forall\,{t \!\ge\! 1{-}\varepsilon}\, \,u(t)=u(1) \,\}
\end{align*}

On the other hand, if we consider $A_{\infty}$-form of concatenations using $\stPaths_{\varepsilon}(X)$, we need some more tricks to concatenate many paths.
In this paper, we adopt slightly different ways to consider a smooth (h-unital) $A_{\infty}$-form for a concatenation.

Let $\Path{X} = \{\,u \in \mid u{\comp}\pi_{set}=u\,\}$, where $\pi_{set} : \real \to \real$ is a continuous idempotent, i.e, $\pi_{set}{\comp}\pi_{set}=\pi_{set}$, which is defined as follows:
$$
\pi_{set}(t)=\max\{0,\min\{1,t\}\}=\min\{1,\max\{0,t\}\}.
$$
Then by definition, $\pi_{set}$ enjoys the following properties.
\begin{enumerate*}
\item\label{property:pi1} $\pi_{set}(t)=0$, $t \le 0$,
\hitem\label{property:pi2} $\pi_{set}(t)+\pi_{set}(1{-}t)=1$,
\hitem\label{property:pi3} $\pi_{set}(t)=t$, $0 < t < 1$.
\end{enumerate*}\par\noindent
Then by (\ref{property:pi1}) and (\ref{property:pi2}) above, we have $\pi_{set}(t)=1$, $t \ge 1$ as well.

\section{Basic properties on subductions}

Let us recall basic properties on subductions in \diffeology{} used in this paper.

\begin{lem}\label{lem:subduction-product}
Let $\varpi_{1} : K \to X$ and $\varpi_{2} : L \to Y$ be two subductions.
Then, $\varpi=\varpi_{1} \times \varpi_{2} : K \times L \to X \times Y$ is also a subduction.
\end{lem}
\begin{proof}
This can be obtained using \cite[Lemma 2.5]{MR3913971}.
But we give here a direct proof: clearly, $\varpi$ is a smooth surjection, and so we are left to show that a plot on $X \times Y$ can be pulled back to $K \times L$ locally.
Let $P : V \to X \times Y$ be a plot.
We denote by $\proj_{k}$ the canonical projection from a product to its $k$-th factor, $k=1,2$.
Then, $P_{\!1} = \proj_{1}{\comp}P : V \to X$ and $P_{\!2} = \proj_{2}{\comp}P : V \to Y$ are plots, for each $\proj_{k}$ is smooth.
Since $\varpi_{1} : K \to X$ and $\varpi_{2} : L \to Y$ are subductions, there is an open covering $\{V_{\!\alpha}\}$ of $V$ such that there are plots $Q_{1\alpha} : V_{\!\alpha} \to K$ and $Q_{2\alpha} : V_{\!\alpha} \to L$ satisfying $P_{\!k}|_{V_{\!\alpha}}=\varpi_{k}{\comp}Q_{k\alpha}$, $k=1,2$.
Using the data $Q_{1\alpha}$ and $Q_{2\alpha}$, we obtain a smooth map $Q_{\alpha} : V_{\!\alpha} \to K \times L$ satisfying $Q_{k\alpha} = \proj_{k}{\comp}Q_{\alpha}$, $k=1,2$.
Thus we obtain $\varpi{\comp}Q_{\alpha}=P|_{V_{\!\alpha}}$, and hence $X \times Y$ has the push-forward diffeology by $\varpi : K \times L \to X \times Y$.
\end{proof}

From now on, we assume that $L$, $Y$ and $X$ are diffeological spaces, and that $\varpi : L \to Y$ is a subduction.
Here, we remark that $\dim{Y} \le \dim{L}$ diffeologically.

\begin{lem}\label{lem:subduction-universality}
For a map $g : Y \to X$, $g$ is smooth iff $g{\comp}\varpi : L \to X$ is smooth.
\end{lem}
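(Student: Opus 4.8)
The plan is to prove the two implications separately, with essentially all the content residing in the ``if'' direction. The ``only if'' direction is immediate: a subduction is in particular a smooth map, so if $g$ is smooth then $g{\comp}\varpi$ is a composite of smooth maps and hence smooth, using nothing more than the fact that composites of morphisms in $\diffeology{}$ are again morphisms.

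For the converse, suppose $g{\comp}\varpi : L \to X$ is smooth; I must show that $g$ carries every plot of $Y$ to a plot of $X$. So let $P : V \to Y$ be an arbitrary plot. The key point is that, because $\varpi$ is a subduction and not merely a smooth surjection, $Y$ carries the push-forward diffeology along $\varpi$; concretely this says that $P$ admits local lifts through $\varpi$. That is, there is an open covering $\{V_{\alpha}\}$ of $V$ together with plots $Q_{\alpha} : V_{\alpha} \to L$ satisfying $\varpi{\comp}Q_{\alpha}=P|_{V_{\alpha}}$ for every $\alpha$.

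Given such lifts, on each piece $V_{\alpha}$ I can rewrite the restriction as $(g{\comp}P)|_{V_{\alpha}} = g{\comp}\varpi{\comp}Q_{\alpha}=(g{\comp}\varpi){\comp}Q_{\alpha}$, which is a plot of $X$ on $V_{\alpha}$ since it is the composite of the smooth map $g{\comp}\varpi$ with the plot $Q_{\alpha}$. Thus $g{\comp}P$ restricts to a plot on each member of the covering $\{V_{\alpha}\}$, and the locality axiom (D3) then lets me glue these to conclude that $g{\comp}P$ itself lies in $\mathcal{D}_{X}(V)$. Since $P$ was an arbitrary plot of $Y$, this shows $g$ is smooth.

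The only step that genuinely uses a hypothesis is the local lifting of $P$, and this is precisely where the subduction assumption on $\varpi$ is indispensable; everything else is routine composition together with the sheaf-type gluing already built into the definition of a diffeological space. Accordingly I anticipate no real obstacle here, only the need to invoke the push-forward characterization of subductions at exactly the right moment.
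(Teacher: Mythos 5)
Your argument is correct and coincides with the paper's own proof: both dispose of the forward implication as a trivial composition of smooth maps and, for the converse, use the subduction property to lift an arbitrary plot $P$ of $Y$ locally to plots $Q_{\alpha}$ of $L$, rewrite $(g{\comp}P)|_{V_{\alpha}}=(g{\comp}\varpi){\comp}Q_{\alpha}$, and invoke the locality axiom (D3) to conclude $g{\comp}P$ is a plot. No gaps.
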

\begin{proof}
It is sufficient to show that $g$ is smooth if $g{\comp}\varpi : L \to X$ is smooth: let $P : U \to Y$ be a plot.
Since $\varpi$ is a subduction, there is an open covering $\{V_{\alpha}\}$ of $U$ and plots $\{P_{\alpha} : V_{\alpha} \to L\}$ such that $P|_{V_{\alpha}}$ $=$ $\varpi{\comp}P_{\alpha}$ for all $\alpha$.
Since $g{\comp}P|_{V_{\alpha}}$ $=$ $(g{\comp}\varpi){\comp}P_{\alpha} : V_{\alpha} \to X$ is smooth for all $\alpha$, so is $g{\comp}P : U \to X$.
Thus $g$ is smooth.
\end{proof}

\begin{prop}\label{prop:subduction-dual}
$\varpi$ induces an induction $\varpi^{*} : C^{\infty}(Y,X) \to C^{\infty}(L,X)$.
\end{prop}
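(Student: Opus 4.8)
The plan is to show that $\varpi^{*}$, defined by $\varpi^{*}(g)=g{\comp}\varpi$, is injective and induces on $C^{\infty}(Y,X)$ precisely the pullback of the diffeology of $C^{\infty}(L,X)$. Injectivity is immediate: since $\varpi$ is a subduction it is in particular surjective, so $g_{1}{\comp}\varpi=g_{2}{\comp}\varpi$ forces $g_{1}=g_{2}$. That $\varpi^{*}$ actually lands in $C^{\infty}(L,X)$ is exactly the content of Lemma~\ref{lem:subduction-universality}. It thus remains to prove the key characterization: a map $P:U\to C^{\infty}(Y,X)$ is a plot \emph{if and only if} $\varpi^{*}{\comp}P:U\to C^{\infty}(L,X)$ is a plot. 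This single statement packages both the smoothness of $\varpi^{*}$ (the ``only if'' direction) and the initiality of the pullback diffeology (the ``if'' direction).

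First I would pass to adjoints via the cartesian-closed structure of $\diffeology$. For any domain $U$, a map $P:U\to C^{\infty}(Y,X)$ is a plot precisely when its adjoint $\hat{P}:U\times Y\to X$, $(u,y)\mapsto P(u)(y)$, is smooth, and likewise $R:U\to C^{\infty}(L,X)$ is a plot precisely when $\hat{R}:U\times L\to X$ is smooth. A direct computation of adjoints gives $\widehat{\varpi^{*}{\comp}P}=\hat{P}{\comp}(\id_{U}\times\varpi)$, since $(\varpi^{*}{\comp}P)(u)(\ell)=P(u)(\varpi(\ell))=\hat{P}(u,\varpi(\ell))$. This reduces the desired equivalence to the claim that $\hat{P}:U\times Y\to X$ is smooth if and only if $\hat{P}{\comp}(\id_{U}\times\varpi):U\times L\to X$ is smooth.

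The last equivalence is then handled by the two lemmas of this section. The identity $\id_{U}:U\to U$ is trivially a subduction, so Lemma~\ref{lem:subduction-product} shows that $\id_{U}\times\varpi:U\times L\to U\times Y$ is again a subduction. Applying Lemma~\ref{lem:subduction-universality} to the map $\hat{P}:U\times Y\to X$ along this subduction yields exactly that $\hat{P}$ is smooth iff $\hat{P}{\comp}(\id_{U}\times\varpi)$ is smooth, which completes the argument.

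I expect the only real subtlety to be the bookkeeping at the adjoint level: one must invoke cartesian-closedness to translate ``plot of a mapping space'' into ``smoothness of an adjoint,'' and one must verify that $\id_{U}\times\varpi$ is a subduction so that Lemma~\ref{lem:subduction-product} and Lemma~\ref{lem:subduction-universality} can be applied on $U\times Y$ and $U\times L$ rather than on $Y$ and $L$ directly. Once these reductions are in place the proof is entirely formal, requiring no further analytic input.
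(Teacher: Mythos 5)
Your proof is correct and follows essentially the same route as the paper: both arguments pass to adjoints via the functional diffeology, use Lemma~\ref{lem:subduction-product} to see that $\id_{U}\times\varpi$ is a subduction, and then invoke Lemma~\ref{lem:subduction-universality} to transfer smoothness between $U\times Y$ and $U\times L$. The only difference is bookkeeping --- the paper lifts a plot of $\Img\varpi^{*}$ to a plot $Q$ of $C^{\infty}(Y,X)$, while you phrase the same step as the ``plot iff pullback is a plot'' characterization of an induction --- and these are equivalent given the injectivity of $\varpi^{*}$.
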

\begin{proof}
Since $\varpi : L \to Y$ is a smooth surjection, $\varpi^{*} : C^{\infty}(Y,X) \to C^{\infty}(L,X)$ is a smooth injection.
Now we are left to show that a plot in $\Img\varpi^{*} \subset C^{\infty}(L,X)$ can be pulled back to $C^{\infty}(Y,X)$:
let $P : U \to \Img{\varpi^{*}} \subset C^{\infty}(L,X)$ be a plot.
Then, for any $\mathbb{u} \in U$, there is $F_{\!\mathbb{u}} \in C^{\infty}(Y,X)$ such that $P(\mathbb{u}) = F_{\!\mathbb{u}}{\comp}\varpi$.
For $t, \,s \in L$ with $\varpi(t)=\varpi(s)$ and $\mathbb{u} \in U$, the adjoint $\widehat{P} : U \times L \to X$ of $P$ is a smooth map satisfying $\widehat{P}(\mathbb{u},t) = P(\mathbb{u})(t)  = F_{\!\mathbb{u}}{\comp}\varpi(t) = F_{\!\mathbb{u}}{\comp}\varpi(s) = P(\mathbb{u})(s) = \widehat{P}(\mathbb{u},s)$.
Thus the smooth map $\widehat{P} : U \times L \to X$ induces a map $\widehat{Q} : U \times Y \to X$ such that $\widehat{Q}{\comp}(\id \times \varpi) = \widehat{P}$, where $\id \times \varpi : U \times L \to U \times Y$ is a subduction by Lemma \ref{lem:subduction-product}.
By Lemma \ref{lem:subduction-universality}, $\widehat{Q}$ is smooth and hence its adjoint $Q : U \to C^{\infty}(Y,X)$ is a plot satisfying $(\varpi^{*}{\comp}Q(\mathbb{u}))(t)$ $=$ $Q(\mathbb{u})(\varpi(t))$ $=$ $\widehat{Q}(\mathbb{u},\varpi(t))$ $=$ $\widehat{Q}{\comp}(\id \times \varpi)(\mathbb{u},t)$ $=$ $\widehat{P}(\mathbb{u},t)$ $=$ $P(\mathbb{u})(t)$, $(\mathbb{u},t) \in U \times L$, which implies $\varpi^{*}{\comp}Q = P$, and we have done.
\end{proof}

We further assume that $Y$ is a diffeological quotient $L/\varpi_{set}$ by a relation $\varpi_{set}$ on $L$, i.e, $Y=\{\,y \mid \exists\,{t \in L} \ y=\widehat\varpi_{set}(t)\,\}$ and $\varpi(t)=\widehat\varpi_{set}(t)$, where $\widehat\varpi_{set}$ is an equivalence relation on $L$ generated by $\varpi_{set}$.
Hence $\varpi{\comp}\varpi_{set}=\varpi$ as relations from $L$ to $Y$.

\begin{prop}\label{prop:diffeologicalquotient}
$\Img{\varpi^{*}}=\{\,f \in C^{\infty}(L,X) \mid f{\comp}\varpi_{set}=f \ \text{as relations}\,\}$.
\end{prop}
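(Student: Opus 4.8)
The plan is to establish the two inclusions separately, reducing everything to a set-theoretic factorisation through the quotient $Y=L/\widehat\varpi_{set}$ and then upgrading to smoothness by the universal property of the subduction $\varpi$ (Lemma \ref{lem:subduction-universality}). First I would record the reformulation that, for $f \in C^{\infty}(L,X)$, the relation identity $f{\comp}\varpi_{set}=f$ is equivalent to the pointwise condition $f(t)=f(t')$ for every pair $(t,t') \in \varpi_{set}$, and that the quotient presentation $\varpi(t)=\widehat\varpi_{set}(t)$ gives $\varpi(t)=\varpi(t')$ exactly when $(t,t') \in \widehat\varpi_{set}$.

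For the inclusion $\Img{\varpi^{*}} \subseteq \{\,f \mid f{\comp}\varpi_{set}=f\,\}$, let $f=\varpi^{*}(g)=g{\comp}\varpi$ with $g \in C^{\infty}(Y,X)$. Treating all three of $f$, $\varpi$ and $\varpi_{set}$ as relations and using associativity of relation composition together with the standing hypothesis $\varpi{\comp}\varpi_{set}=\varpi$, I get $f{\comp}\varpi_{set}=g{\comp}(\varpi{\comp}\varpi_{set})=g{\comp}\varpi=f$, which is the desired identity; this single computation yields both inclusions of relations at once, bypassing any reflexivity bookkeeping on $\varpi_{set}$.

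For the reverse inclusion, suppose $f \in C^{\infty}(L,X)$ satisfies $f{\comp}\varpi_{set}=f$, so that $f(t)=f(t')$ whenever $(t,t') \in \varpi_{set}$. The crucial step is to promote this to invariance under $\widehat\varpi_{set}$: the relation $\{\,(t,t') \mid f(t)=f(t')\,\}$ is an equivalence relation on $L$ containing $\varpi_{set}$, hence by minimality it contains the equivalence relation $\widehat\varpi_{set}$ generated by $\varpi_{set}$. Thus $f(t)=f(t')$ whenever $\varpi(t)=\varpi(t')$, so $f$ is constant on the fibres of the surjection $\varpi$ and factors uniquely as $f=g{\comp}\varpi$ for a set map $g : Y \to X$. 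Since $g{\comp}\varpi=f$ is smooth and $\varpi$ is a subduction, Lemma \ref{lem:subduction-universality} makes $g$ smooth, whence $f=\varpi^{*}(g) \in \Img{\varpi^{*}}$.

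The main obstacle is precisely this passage from the generating relation $\varpi_{set}$ to the generated equivalence relation $\widehat\varpi_{set}$: the hypothesis only controls $f$ on $\varpi_{set}$-related pairs, whereas the fibres of $\varpi$ are the full $\widehat\varpi_{set}$-classes, and it is only the minimality of the generated equivalence relation that bridges this gap. Everything else is formal, the smoothness being handed to us by Lemma \ref{lem:subduction-universality} and the relation identity $\varpi{\comp}\varpi_{set}=\varpi$ supplying the forward direction in one stroke.
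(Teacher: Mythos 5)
Your proof is correct and takes essentially the same route as the paper: the forward inclusion is the one-line computation from $\varpi{\comp}\varpi_{set}=\varpi$, and the reverse inclusion factors $f$ through the quotient and invokes Lemma \ref{lem:subduction-universality} to get smoothness of the induced map. The only difference is that you make explicit the passage from $\varpi_{set}$-invariance to $\widehat\varpi_{set}$-invariance via minimality of the generated equivalence relation, a step the paper compresses into the phrase ``$f$ induces a map $g : Y \to X$''.
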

\begin{proof}
We show that $\{\,f \in C^{\infty}(L,X) \mid f{\comp}\varpi_{set}=f \,\} \subset \Img{\varpi^{*}}$.
If $f{\comp}\varpi_{set}=f \in C^{\infty}(L,X)$, then $f$ induces a map $g : Y \to X$ such that $f=g{\comp}\varpi$.
By Lemma \ref{lem:subduction-universality}, $g$ is smooth, and hence $f \in \Img{\varpi^{*}}$.
The converse is clear by $\varpi{\comp}\varpi_{set}=\varpi$.
\end{proof}
If the relation $\varpi_{set}$ is a continuous idempotent on $L$, then $Y$ is topologically the same as $\Img\varpi_{set} \subset L$, while $Y$ might not be a diffeological subspace of $L$.

\section{{\Cubictxt} complex in \topology{}}

In \topology{}, we use the symbol $\I_{top}$ for the topological subspace $[0,1]$ of $\real$.
We remark that the topology of $\I_{top}=[0,1]$ is the same as the quotient topology induced by a continuous map $\pi_{top} : \real \to \I_{top}$ given by $\pi_{top}(t)=\pi_{set}(t) \in [0,1]$.

Now we introduce a generalised notion of a simplicial or cubical complex using an idea of a {\cubicsettxt}:
a $q$-cubic set $\sigma$ in $\real^{n}$ is defined as a convex body in some affine subspace $L_{\sigma}$ in $\real^{n}$, inductively on $q$, $-1 \!\le\! q \!\le\! n$ (see also \cite{MR3991180}).
\begin{enumerate}
\item%
The $-1$-{\cubicsettxt} in $\real^{n}$ is the empty set $\emptyset \subset \real^{n}$. In this case, $L_{\emptyset}=\emptyset$.
\vspace{.5ex}\item
A $0$-cubic set in $\real^{n}$ is a point $\mathbb{p} \in \real^{n}$.
In this case, $L_{\mathbb{p}}=\{\mathbb{p}\}$.
\vspace{.5ex}\item%
Let $\sigma_{1}$ and $\sigma_{2}$ be respectively $q_{1}$-cubic and $q_{2}$-cubic sets in $\real^{n}$ with $-1 \le q{-}1 \le q_{1}{+}q_{2} \le q \le n$, where $\sigma_{1}$ and $\sigma_{2}$ are convex bodies in affine subspaces $L_{1}$ and $L_{2}$, respectively.
Let $V_{1}$ and $V_{2}$ be vector subspaces of $\real^{n}$ such that $V_{1} \cap V_{2} = \{\mathbb{0}\}$, 
$L_{1} = \mathbb{a}_{1} \!+\! V_{1}$ and $L_{2} = \mathbb{a}_{2} \!+\! V_{2}$ for some $\mathbb{a}_{1} \in \sigma_{1}$ and $\mathbb{a}_{2} \in \sigma_{2}$.
\begin{enumerate}
\vspace{.5ex}\item[($q_{1}\!+\!q_{2}=q{-}1$ and $L_{1} \!\cap\! L_{2} = \emptyset$ (or $\mathbb{a}_{2}\!-\!\mathbb{a}_{1} \not\in V_{1}\!+\!V_{2}$))]\label{cubic-const-join}\ \ 
The subset $\sigma_{1} \join \sigma_{2}$ $=$ $\{\,(1{-}t){\cdot}\mathbb{x}$ ${+}$ $t{\cdot}\mathbb{y} \,;\, \mathbb{x} \!\in\! \sigma_{1}, \,\mathbb{y} \!\in\! \sigma_{2}, \,t \!\in\! I_{top}\,\} \subset \real^{n}$ is a $q$-{\cubicsettxt} in $\real^{n}$.
In this case, we have a relative homeomorphism $\phi_{\sigma_{1},\sigma_{2}} : (\sigma_{1} \times I_{top} \times \sigma_{2} , \sigma_{1}  \times \{0,1\} \times \sigma_{2}) \to (\sigma_{1} \join \sigma_{2},\sigma_{1} \amalg \sigma_{2})$ given by $\phi_{\sigma_{1},\sigma_{2}}(\mathbb{x},t,\mathbb{y})=(1{-}t){\cdot}\mathbb{x}\!+\!t{\cdot}\mathbb{y}$.
\vspace{.5ex}\item[($q_{1}\!+\!q_{2}\!=\!q$ and $L_{1} \!\cap L_{2} \not= \emptyset$ (or $\mathbb{a}_{2}\!-\!\mathbb{a}_{1} \in V_{1}\!+\!V_{2}$))]\label{cubic-const-product}\ \ 
Let $L_{1} \!\cap L_{2} \!=\! \{\mathbb{a}\}$, $\mathbb{a} \!\in\! \real^{n}$.
Then the subset $\sigma_{1}{\timesab{L_{1}}{L_{2}}} \sigma_{2} = \{\mathbb{x} \,{+}\, \mathbb{y} \,{-}\, \mathbb{a} \,;\, \mathbb{x} \!\in\! \sigma_{1},\, \mathbb{y} \!\in\! \sigma_{2}\}$ is a $q$-{\cubicsettxt} in $\real^{n}$.
In this case, we have a homeomorphism $\psi_{\sigma_{1},\sigma_{2}} : \sigma_{1}  \times  \sigma_{2} \to \sigma_1 \timesab{L_{1}}{L_{2}} \sigma_{2}$ given by $\psi_{\sigma_{1},\sigma_{2}}(\mathbb{x},\mathbb{y})=\mathbb{x} \!+\! \mathbb{y} \!-\! \mathbb{a}$.
\end{enumerate}
\end{enumerate}

For each $n \geq 0$ and $q$ with $-1 \le q \le n$, we denote by $\Cubic(n)^{q}$ the set of all $q$-{\cubicsetstxt} in $\real^{n}$ and $\Cubic(n) = \{\emptyset\}\cup\underset{q \geq 0}{\cup}\Cubic(n)^{q}$.
Then the above construction yields two natural products: the join $\ast : C(n)^{q} \times C(n')^{q'} \to C(n{+}n'{+}1)^{q+q'+1}$ induced by (\ref{cubic-const-join}) above using $\real^{n} \homeo \real^{n} \times \{0\} \times \{\mathbb{0}\} = V_{1} \subset \real^{n} \times \real \times \real^{n'} \supset V_{2} = \{\mathbb{0}\} \times \{0\} \times \real^{n'} \homeo \real^{n'}$ with $\mathbb{a}_{t}=(\mathbb{0},t,\mathbb{0})$ for $t=1, 2$, and the product $\times : C(n)^{q} \times C(n')^{q'} \to C(n{+}n')^{q+q'}$ induced by (\ref{cubic-const-product}) above using $\real^{n} \homeo \real^{n} \times \{\mathbb{0}\} = V_{1} \subset \real^{n} \times \real^{n'} \supset V_{2} = \{\mathbb{0}\} \times \real^{n'} \homeo \real^{n'}$ with $\mathbb{a}=\mathbb{a}_{1}=\mathbb{a}_{2}=(\mathbb{0},\mathbb{0})$.

\smallskip

The notion of a face of a {\cubicsettxt} is inductively given as follows.
\begin{enumerate}
\item
Let $\sigma$ be a {\cubicsettxt}.
Then the emptyset $\emptyset$ and $\sigma$ itself are faces of $\sigma$.
\item
Let $\sigma_{1}$ and $\sigma_{2}$ be two {\cubicsettxt}.
Then we have the following.
\begin{enumerate}
\item
A face of $\sigma_{1} \join \sigma_{2}$ is expressed as $\tau_{1} \join \tau_{2}$ for some faces $\tau_{1}$ and $\tau_{2}$ of $\sigma_{1}$ and $\sigma_{2}$, respectively. Therefore $\sigma_{1}=\sigma_{1} \join \emptyset$ and $\sigma_{2} = \emptyset \join \sigma_{2}$ are faces of $\sigma_{1} \join \sigma_{2}$.
\item
A face of $\sigma_{1} \timesab{L_{1}}{L_{2}} \sigma_{2}$ is expressed as $\tau_{1} \timesab{L_{1}}{L_{2}} \tau_{2}$ for some faces $\tau_{1}$ and $\tau_{2}$ of $\sigma_{1}$ and $\sigma_{2}$, respectively.
\end{enumerate}
\end{enumerate}

We denote $\tau \prec \sigma$ if $\tau \in \Cubic(n)$ is a face of $\sigma \in \Cubic(n)$. 

An ordered subset $\mathbb{K} \subset \Cubic(n)$ is called a {\cubictxt} complex, if the following holds.
\begin{enumerate*}\setcounter{enumi}{-1}\vspace{.5ex}
\vitem
$\forall\,\tau, \,\sigma \in \mathbb{K} \ \ \tau\cap\sigma \in \Cubic(n), \ \tau\cap\sigma \prec \tau \ \text{and} \ \tau\cap\sigma \prec \sigma$.\vspace{.5ex}
\vitem
$\emptyset \in \mathbb{K}$,
\hitem
$\forall\,\tau \in \Cubic(n) \ \,\forall\,\sigma \in \mathbb{K} \ \ \tau\prec\sigma \implies \tau \in \mathbb{K}$,
\end{enumerate*}\vspace{.5ex}\noindent
A subset $\mathbb{L} \subset \mathbb{K}$ with the following properties is called a {\cubictxt} subcomplex of $\mathbb{K}$.\vspace{.5ex}
\begin{enumerate*}
\vitem
$\emptyset \in \mathbb{L}$,
\hitem
$\forall\,\tau \in \mathbb{K} \ \,\forall\,\sigma \in \mathbb{L} \ \ \tau\prec\sigma \implies \tau \in \mathbb{L}$.
\end{enumerate*}
Then we denote $\dim{\mathbb{K}}=\max\{\,\dim\sigma\mid\sigma\in\mathbb{K}\,\}$, where $\dim\sigma=q$ if $\sigma \in C(n)^{q}$.

For any $q$-{\cubictxt} set $\sigma \in K$, $\mathbb{K}(\sigma)=\{\,\tau \!\in\! C(n) \midvert \tau \preceqq \sigma\,\}$ for $q \ge -1$ and $\mathbb{K}(\dot\sigma) = \{\tau \!\in\! C(n) \midvert \tau \precneqq \sigma\}$ for $q \ge 0$ are {\cubictxt} subcomplexes of $\mathbb{K}$.
\begin{prop}
For any $q$-{\cubictxt} set $\sigma \in C(n)$, $q \ge 0$, we have $\partial\sigma = \vert{\mathbb{K}(\dot\sigma)}\vert$.
\end{prop}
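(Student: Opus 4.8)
The plan is to prove the identity by induction on $q=\dim\sigma$, reading $\vert\mathbb{K}(\dot\sigma)\vert=\bigcup_{\tau\precneqq\sigma}\tau$ as the union of the proper faces of $\sigma$ (so that $\vert\mathbb{K}(\sigma)\vert=\bigcup_{\tau\preceqq\sigma}\tau=\sigma$), and $\partial\sigma$ as the relative boundary of the convex body $\sigma$ inside its affine hull $L_{\sigma}$. The base case $q=0$ is immediate: then $\sigma$ is a point, its only proper face is $\emptyset$, and both $\partial\sigma$ and $\vert\mathbb{K}(\dot\sigma)\vert$ are empty. For $q\ge1$ I would use that $\sigma$ admits a non-degenerate presentation either as a join $\sigma_1\join\sigma_2$ or as a product $\sigma_1\timesab{L_1}{L_2}\sigma_2$ with $\dim\sigma_1,\dim\sigma_2<q$ (a degenerate presentation, in which one factor is empty or a point not lowering the dimension, merely reproduces one of its factors and may be discarded). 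The argument then splits into a combinatorial half and a geometric half that meet in the two identities
\begin{gather*}
\partial(\sigma_1\join\sigma_2)=(\partial\sigma_1\join\sigma_2)\cup(\sigma_1\join\partial\sigma_2),\\
\partial(\sigma_1\timesab{L_1}{L_2}\sigma_2)=(\partial\sigma_1\timesab{L_1}{L_2}\sigma_2)\cup(\sigma_1\timesab{L_1}{L_2}\partial\sigma_2),
\end{gather*}
read with the convention $\emptyset\join X=X\join\emptyset=X$.

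For the combinatorial half I would use the inductive description of faces: every proper face of $\sigma_1\join\sigma_2$ is $\tau_1\join\tau_2$ with $\tau_1\preceqq\sigma_1$, $\tau_2\preceqq\sigma_2$ and $(\tau_1,\tau_2)\ne(\sigma_1,\sigma_2)$. Since $\tau_1\join\tau_2\subseteq\tau_1\join\sigma_2$ and $\tau_1\join\tau_2\subseteq\sigma_1\join\tau_2$, grouping the proper faces according to whether $\tau_1\precneqq\sigma_1$ or $\tau_2\precneqq\sigma_2$ and distributing $\join$ over the resulting unions yields $\vert\mathbb{K}(\dot\sigma)\vert=(\partial\sigma_1\join\sigma_2)\cup(\sigma_1\join\partial\sigma_2)$, where I invoke the induction hypotheses $\partial\sigma_i=\bigcup_{\tau_i\precneqq\sigma_i}\tau_i$ and $\bigcup_{\tau_i\preceqq\sigma_i}\tau_i=\sigma_i$. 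The product case is identical (and cleaner, since a product with $\emptyset$ is $\emptyset$, matching the union conventions without extra care). Comparing with the displayed identities then closes the induction.

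For the geometric half I would prove the two boundary identities via relative interiors. For the product, $\psi_{\sigma_1,\sigma_2}$ is an affine isomorphism onto its image, so it preserves relative interiors and boundaries, and the claim reduces to the standard fact $\mathrm{relint}(\sigma_1\times\sigma_2)=\mathrm{relint}\,\sigma_1\times\mathrm{relint}\,\sigma_2$. For the join I would establish
\[
\mathrm{relint}(\sigma_1\join\sigma_2)=\{\,(1{-}t)\mathbb{x}+t\mathbb{y}\mid t\in(0,1),\ \mathbb{x}\in\mathrm{relint}\,\sigma_1,\ \mathbb{y}\in\mathrm{relint}\,\sigma_2\,\},
\]
and deduce the boundary identity by taking complements inside $\sigma_1\join\sigma_2$. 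The crucial input is the general-position hypothesis built into the join construction, namely $V_1\cap V_2=\{\mathbb{0}\}$ and $\mathbb{a}_2-\mathbb{a}_1\notin V_1+V_2$: it guarantees that for $t\in(0,1)$ the representation $(1{-}t)\mathbb{x}+t\mathbb{y}$ is unique, and that each of the two caps $\sigma_1$ (at $t=0$) and $\sigma_2$ (at $t=1$) lies in the boundary rather than in the relative interior.

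I expect the main obstacle to be this join boundary identity, for two intertwined reasons. First, the relative-interior computation genuinely needs $\mathbb{a}_2-\mathbb{a}_1\notin V_1+V_2$ to rule out a cap point being an interior point of the join; making this precise is where a short affine-algebra argument (solving $s\mathbb{y}'=\mathbb{p}-(1{-}s)\mathbb{x}'$ and deducing $s=0$) is required. Second, the bookkeeping around the convention $\emptyset\join X=X$ must be handled carefully so that the two caps $\sigma_1$ and $\sigma_2$ are correctly absorbed into $\sigma_1\join\partial\sigma_2$ and $\partial\sigma_1\join\sigma_2$ respectively; the degenerate subcase in which a factor is a single point, so that its boundary is empty, is exactly where this convention is load-bearing and should be verified on its own.
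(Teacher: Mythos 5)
The paper states this proposition without proof, so there is no argument of the author's to compare yours against; judged on its own, your plan is correct and is the natural way to supply the missing verification. The induction on $q$ after discarding degenerate presentations, the split into a combinatorial identity (the union of proper faces distributes through $\join$ and $\timesab{L_{1}}{L_{2}}$, using the paper's inductive description of faces) and a geometric one (the two displayed boundary formulas), and the reduction of the geometric half to relative interiors are all sound. One simplification worth noting: your relative-interior formula for the join is exactly the standard description of $\mathrm{relint}\,\mathrm{conv}(C_{1}\cup C_{2})$ for convex sets (Rockafellar), which holds with no general-position hypothesis at all; the condition $\mathbb{a}_{2}-\mathbb{a}_{1}\notin V_{1}+V_{2}$ is needed only afterwards, precisely as you say, to see that $t$ is a well-defined affine function on $\sigma_{1}\join\sigma_{2}$, so that the caps $t=0$ and $t=1$ cannot meet the relative interior and the representation with $t\in(0,1)$ is unique. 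You have also correctly isolated the two points of bookkeeping: the convention $\emptyset\join X=X$ (without which $\sigma_{1}\join\partial\sigma_{2}$ fails to absorb the cap $\sigma_{1}$ when $\sigma_{2}$ is a point), and the fact that $\partial\sigma$ must be read relative to the affine hull $L_{\sigma}$, since with the ambient boundary in $\real^{n}$ the statement already fails for a point. The only step I would ask you to make explicit is that every $q$-cubic set with $q\ge1$ does reach a non-degenerate presentation after finitely many unwindings, which follows because each degenerate step leaves the set unchanged while strictly shortening its construction tree.
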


For any two {\cubictxt} complexes $\mathbb{K} \subset C(n)$ and $\mathbb{L} \subset C(m)$, we obtain 
\begin{enumerate}
\item
$\mathbb{K} \join \mathbb{L} := \{\,\sigma \join \tau \midvert \sigma\!\in\!\mathbb{K}, \tau\!\in\!\mathbb{L}\,\} \subset C(n{+}m{+}1)$
\vspace{.5ex}\item
$\mathbb{K} \times \mathbb{L} := \{\,\sigma \times \tau \midvert \sigma\!\in\!\mathbb{K}, \tau\!\in\!\mathbb{L}\,\} \subset C(n{+}m)$
\end{enumerate}

\begin{prop}
For any two {\cubictxt} sets $\sigma, \tau \in C(n)$ and $0$-{\cubictxt} sets $a$, $b$, we have $(\sigma \join a) \times (\tau \join b)$ $\homeo$ $\mathbb{L} \join c$, where $\mathbb{L} = (\sigma \join a) \times \tau \cup \sigma \times (\tau \join b)$ and $c=(a,b)$.
\end{prop}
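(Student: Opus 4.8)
The plan is to realise the left-hand side as a convex body in which $c$ is a vertex, to recognise $\mathbb{L}$ as exactly the union of the facets of that body missing $c$, and then to let the desired homeomorphism be the radial projection from $c$. First I would replace the formal products by literal ones: by the homeomorphism $\psi$ of the product construction and the relative homeomorphism $\phi$ of the join construction, I may treat $P:=(\sigma\join a)\times(\tau\join b)$ as the genuine Cartesian product of the convex bodies $\sigma\join a=\mathrm{conv}(\sigma\cup\{a\})$ and $\tau\join b=\mathrm{conv}(\tau\cup\{b\})$. Because $a$ and $b$ are points affinely independent from $\sigma$ and from $\tau$ (the very condition under which the two joins are defined), they are extreme points of the respective joins, so $P$ is a convex body in which $c=(a,b)$ is a vertex.

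Next I would locate $\mathbb{L}$ inside $\partial P$. By the face rules for $\join$ and $\times$ stated above, every facet of $P$ is either $F\times(\tau\join b)$ for a facet $F$ of $\sigma\join a$, or $(\sigma\join a)\times G$ for a facet $G$ of $\tau\join b$. The unique facet of a join $\sigma\join a$ that avoids its apex $a$ is the base $\sigma$, and likewise for $\tau\join b$; hence the facets of $P$ that miss $c=(a,b)$ are precisely $\sigma\times(\tau\join b)$ and $(\sigma\join a)\times\tau$, whose union is exactly $\mathbb{L}$. Thus $\mathbb{L}$ is the union of all facets of $P$ not containing the vertex $c$.

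Finally I would project radially from $c$. Define $h(\ell,\lambda)=(1{-}\lambda)\cdot\ell+\lambda\cdot c$ for $\ell\in\mathbb{L}$ and $\lambda\in I_{top}$; it lands in $P$ by convexity and collapses $\mathbb{L}\times\{1\}$ to $c$, so it factors through the cone on $\mathbb{L}$, which is homeomorphic to $\mathbb{L}\join c$. Each ray issuing from $c$ into the interior of $P$ meets $\partial P$ again in a single point, and the facet across which it exits cannot contain $c$: otherwise that facet's supporting hyperplane would contain the whole segment from $c$ to the exit point, contradicting that the ray runs through the interior. Hence every such exit point lies on $\mathbb{L}$, and by continuity the same holds for the boundary rays. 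Continuity of the radial (gauge) function of $P$ based at $c$ then makes the induced map $\mathbb{L}\join c\to P$ a continuous bijection; as $P$ is compact and both spaces are Hausdorff, it is the sought homeomorphism.

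The main obstacle is this last step, and within it the passage from surjectivity to genuine bicontinuity across the strata where the cones degenerate. In the coordinates furnished by $\phi_{\sigma,a}$ and $\phi_{\tau,b}$ the body $P$ is a quotient of $\sigma\times\tau\times I_{top}^{2}$ with $\sigma$ collapsed on the locus $s=1$ and $\tau$ on $t=1$, whereas $\mathbb{L}\join c$ collapses along a different pattern; the radial map acts on the parameter square $I_{top}^{2}$ by sending $(s,t)$ to its exit point on the two edges comprising $\mathbb{L}$ together with the cone height $\min\{s,t\}$. The real work is to check that this single prescription simultaneously respects both collapsing patterns and remains continuous across the diagonal $s=t$ — which is exactly where a naive choice such as $s/(s{+}t)$ breaks down and the height $\min\{s,t\}$ is forced.
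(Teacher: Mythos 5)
The paper states this proposition without proof, so there is no official argument to compare yours against; judged on its own, your proof is correct in outline and is the standard convex\hyphenation{geo-met-ric}-geometric argument for the fact that a product of two cones is the cone, from the product of the apexes, on the union of the two ``mixed'' facets. Your three structural claims are all right: $c=(a,b)$ is an extreme point of $P=(\sigma\join a)\times(\tau\join b)$ because an apex of a join is extreme and a product of extreme points is extreme; the facets of a product of polytopes are (facet)$\,\times\,$(factor) and (factor)$\,\times\,$(facet); and the unique facet of $\sigma\join a$ avoiding $a$ is $\sigma$, so $\mathbb{L}$ is exactly the union of the facets of $P$ missing $c$. The closing appeal to compactness and Hausdorffness legitimately upgrades the continuous bijection $\mathbb{L}\join c\to P$ to a homeomorphism, which also means the explicit-coordinate worries of your final paragraph (the $\min\{s,t\}$ formula, continuity across $s=t$) are not actually needed --- the quotient-map argument sidesteps them entirely.

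The one step you should tighten is the claim that every ray from $c$ meets $\mathbb{L}$ in exactly one point, which you establish only for rays through the interior of $P$ and then dispatch for boundary rays ``by continuity.'' Continuity of the radial function based at a \emph{boundary} point of a convex body is essentially as delicate as what you are proving, and a ray from $c$ lying inside $\partial P$ (say along an edge through $c$) meets $\partial P$ in a whole segment, so ``meets $\partial P$ again in a single point'' is not literally available there. It is cleaner to argue with the facet-defining inequalities: writing $P=\{x:\langle n_i,x\rangle\le h_i\}$, a facet containing $c$ has $h_i=\langle n_i,c\rangle$, so its inequality never terminates a ray starting at $c$ inside $P$; hence the exit point of \emph{every} such ray is forced by some facet with $h_i>\langle n_i,c\rangle$, i.e.\ lies on $\mathbb{L}$, and it is unique because each facet of $\mathbb{L}$ spans a hyperplane missing $c$ (so a line through $c$ meets it at most once) and the ray cannot re-enter $P$ after leaving it. With that, the map $(\ell,\lambda)\mapsto(1-\lambda)\ell+\lambda c$ is a continuous surjection of a compact space onto $P$ whose fibres are exactly the cone identification, and your conclusion follows. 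You should also dispose separately of the degenerate cases $\sigma=\emptyset$ or $\tau=\emptyset$, where one factor of $P$ is a point and the facet enumeration you quote does not apply verbatim (the statement still holds there, reducing to $\tau\join b\homeo(\{a\}\times\tau)\join c$ and its mirror).
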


For any {\cubictxt} complexes $\mathbb{K} \subset C(n)$ and $\mathbb{K}' \subset C(m)$, an order-preserving map $\varphi : \mathbb{K} \rightarrow \mathbb{K}'$ is called a {\cubictxt} map, if the following conditions are satisfied.
\begin{enumerate*}
\vitem
$\varphi^{-1}(\emptyset) = \{\emptyset\}$,
\hitem
$\forall\,\tau' \in \mathbb{K}' \ \,\forall\,\sigma \in \mathbb{K} \ \ \tau' \prec \varphi(\sigma) \implies \exists\,\tau \prec \sigma$ $\varphi(\tau)=\tau'$.
\end{enumerate*}
In particular, the image of a {\cubictxt} map $\varphi : \mathbb{K} \rightarrow \mathbb{K}'$ is a {\cubictxt} subcomplex of $\mathbb{K}'$.

\begin{prop}
Let $\ast$ be a $0$-{\cubictxt} set.
The following maps are {\cubictxt} maps.
\begin{enumerate}
\item
The trivial map $\phi : \mathbb{K} \to \mathbb{K}(\ast)$ given by $\phi(\emptyset)=\emptyset$ and $\phi(\tau)=\{\ast\}$, \vspace{.5ex}$\tau \in \mathbb{K}\smallsetminus\{\emptyset\}$.
\item
The natural inclusion $\phi : \mathbb{L} \hookrightarrow \mathbb{K}$ of cubic subcomplex $\mathbb{L}$ of $\mathbb{K}$.
\vspace{.5ex}\item
For two {\cubictxt} maps $\phi_{1} : \mathbb{K}(1) \to \mathbb{K}'_{1}$ and $\phi_{2} : \mathbb{K}(2) \to \mathbb{K}'_{2}$, maps
\begin{enumerate}
\vspace{.5ex}\item
$\phi : \mathbb{K}(1) \join \mathbb{K}(2) \to \mathbb{K}'_{1} \join \mathbb{K}'_{2}$ given by $\phi(\tau_{1} \join \tau_{2}) = \phi_{1}(\tau_{1}) \join \phi_{2}(\tau_{2})$ and
\vspace{.5ex}\item
$\psi : \mathbb{K}(1) \times \mathbb{K}(2) \to \mathbb{K}'_{1} \times \mathbb{K}'_{2}$ given by $\psi(\tau_{1} \times \tau_{2}) = \phi_{1}(\tau_{1}) \times \phi_{2}(\tau_{2})$.
\end{enumerate}
\end{enumerate}
\end{prop}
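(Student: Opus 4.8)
The plan is to treat the three items separately, in each case checking that the map is order-preserving and then verifying the two defining conditions of a {\cubictxt} map: that the preimage of $\emptyset$ is exactly $\{\emptyset\}$, and the face-lifting property. The common tool throughout is the inductive description of faces recalled above, which says that a face of $\sigma_{1} \join \sigma_{2}$ is precisely of the form $\tau_{1} \join \tau_{2}$ with $\tau_{1} \prec \sigma_{1}$ and $\tau_{2} \prec \sigma_{2}$, and similarly that a face of $\sigma_{1} \times \sigma_{2}$ is $\tau_{1} \times \tau_{2}$ with $\tau_{1} \prec \sigma_{1}$ and $\tau_{2} \prec \sigma_{2}$. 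In other words, the face order on a join or product complex is the product of the face orders of the two factors, and this is what makes all of the verifications go through.

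For (1), note that $\mathbb{K}(\ast) = \{\emptyset, \{\ast\}\}$, so order-preservation is automatic and $\phi^{-1}(\emptyset) = \{\emptyset\}$ holds by the very definition of $\phi$. For the lifting property one only has the two cases $\tau' = \emptyset$, lifted by $\emptyset \prec \sigma$, and $\tau' = \{\ast\}$, lifted by $\sigma \prec \sigma$ itself whenever $\sigma \neq \emptyset$. For (2) every requirement is transparent: the inclusion is order-preserving, only $\emptyset$ maps to $\emptyset$, and the lifting property is exactly the statement that $\mathbb{L}$ is closed under passing to faces, which is built into the definition of a {\cubictxt} subcomplex.

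The content is in (3). First I would check that $\phi$ and $\psi$ are well defined, i.e. that $\phi_{1}(\tau_{1}) \join \phi_{2}(\tau_{2})$ and $\phi_{1}(\tau_{1}) \times \phi_{2}(\tau_{2})$ depend only on $\tau_{1} \join \tau_{2}$ and $\tau_{1} \times \tau_{2}$ respectively. Order-preservation is then immediate from the product description of the face order together with the order-preservation of $\phi_{1}$ and $\phi_{2}$. For the condition on $\emptyset$, I would use that a join is empty iff both factors are empty, whereas a product is empty iff at least one factor is empty; combined with $\phi_{1}^{-1}(\emptyset) = \{\emptyset\}$ and $\phi_{2}^{-1}(\emptyset) = \{\emptyset\}$, this yields $\phi^{-1}(\emptyset) = \{\emptyset\}$ in the join case and $\psi^{-1}(\emptyset) = \{\emptyset\}$ in the product case. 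Finally, for the lifting property, a face $\tau'$ of $\phi(\sigma_{1} \join \sigma_{2}) = \phi_{1}(\sigma_{1}) \join \phi_{2}(\sigma_{2})$ splits as $\tau'_{1} \join \tau'_{2}$ with $\tau'_{i} \prec \phi_{i}(\sigma_{i})$; applying the {\cubictxt} property of each $\phi_{i}$ gives $\tau_{i} \prec \sigma_{i}$ with $\phi_{i}(\tau_{i}) = \tau'_{i}$, and then $\tau_{1} \join \tau_{2} \prec \sigma_{1} \join \sigma_{2}$ is the required lift. The product case is identical with $\join$ replaced by $\times$.

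The step I expect to be the main obstacle is the careful handling of the empty {\cubicsettxt} and the resulting well-definedness in (3). Because $\emptyset \times \tau = \emptyset$ for every $\tau$, an element of a product complex does not have a literally unique factorisation, so I must confirm that the formula for $\psi$ does not depend on the chosen factorisation; this works precisely because $\phi_{i}(\emptyset) = \emptyset$ forces the value to be $\emptyset$. The same subtlety, together with the opposite emptiness criterion for joins, is what must be tracked with care when establishing the condition $\phi^{-1}(\emptyset) = \{\emptyset\}$, since here the join of a non-empty factor with an empty one is non-empty; everything else reduces to a mechanical application of the face correspondence.
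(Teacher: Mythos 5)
The paper states this proposition without proof, treating it as a routine consequence of the definitions of face, complex, and cubic map; your verification is correct and is exactly the argument one would expect. You also rightly isolate the only delicate point, namely the role of the empty cubic set in part (3) --- well-definedness of $\psi$ despite $\emptyset \times \tau = \emptyset$, and the opposite emptiness criteria for joins versus products when checking $\phi^{-1}(\emptyset)=\{\emptyset\}$ --- and your resolution via $\phi_{1}(\emptyset)=\phi_{2}(\emptyset)=\emptyset$ together with the essential uniqueness of the factorisation of a non-empty join or product is sound.
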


For a {\cubictxt} complex $\mathbb{K} \!\subset\! \Cubic(n)$, $n \!\geq\! 0$, we denote $\mathbb{K}^{q}=\{\,\sigma \!\in\! K \,;\, \text{$\sigma$ is $q$-{\cubictxt}} \,\}$, $q \!\geq\! -1$ and by $\vert{\mathbb{K}}\vert = \underset{\sigma\in \mathbb{K}}{\bigcup}\,\sigma$ the polyhedron in $\real^{n}$ associated to $\mathbb{K}$.
For a {\cubictxt} complexes $\mathbb{K}$ and $\mathbb{L}$, a continuous map $f : \vert{\mathbb{L}}\vert \to \vert{\mathbb{K}}\vert$ is called cubic, if there exists a map $\varphi : \mathbb{L} \to \mathbb{K}$ such that $f\vert_{\tau} : \tau \to \varphi(\tau) \subset \vert{\mathbb{K}}\vert$ for any $\tau \in \mathbb{L}$.
Such a map $f$ is often denoted by $|\varphi| : \vert{\mathbb{L}}\vert \to \vert{\mathbb{K}}\vert$.

\section{{\Smoothtxt} {\cubictxt} complex}

In \diffeology{}, we use the symbol $\I$ for a special diffeological space: let $\I=\real/{\pi_{set}}$ be the diffeological quotient of $\real$, where $\I=[0,1]$ as a set, with a subduction $\pi : \real \to \I$ given by $\pi(t)=\pi_{set}(t) \in [0,1]$, so that we obtain $\dim\I=1$ diffeologically and $\pi{\comp}\pi_{set}=\pi$.
The underlying topology ($D$-Topology in \cite[2.8]{MR3025051}) of $\I$ is the same as $\I_{top} \subset \real$, while $\I \hookrightarrow \real$ can not be an induction.

\begin{thm}\label{cor:subduction-dual}
By Propositions \ref{prop:subduction-dual} and \ref{prop:diffeologicalquotient}, $\pi^{*} : C^{\infty}(\I,X) \rightarrow \Path{X}$ is a diffeomorphism, and so $\I$ represents the functor $\Path{}$.
Moreover we have $\dim\I=1$.
\end{thm}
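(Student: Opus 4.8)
The plan is to obtain the statement as a direct specialisation of the general results of Section 1 to the data $L=\real$, $Y=\I$, $\varpi=\pi$ and $\varpi_{set}=\pi_{set}$. First I would verify that these data meet the running hypotheses of Propositions \ref{prop:subduction-dual} and \ref{prop:diffeologicalquotient}: by the construction of $\I$ the map $\pi : \real \to \I$ is a subduction, $\I=\real/{\pi_{set}}$ is the diffeological quotient of $\real$ by the relation $\pi_{set}$, and the identity $\pi{\comp}\pi_{set}=\pi$ holds. With these in hand, both propositions apply without change.

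Next I would apply Proposition \ref{prop:subduction-dual} with $\varpi=\pi$ to conclude that $\pi^{*} : C^{\infty}(\I,X) \to C^{\infty}(\real,X)$ is an induction, that is, a diffeomorphism onto its image equipped with the subspace diffeology. Then Proposition \ref{prop:diffeologicalquotient} with $\varpi_{set}=\pi_{set}$ identifies that image as $\Img{\pi^{*}}=\{\,f \in C^{\infty}(\real,X) \mid f{\comp}\pi_{set}=f\,\}$, which is exactly $\Path{X}$ by definition. Since an induction is a diffeomorphism onto its image and that image is all of $\Path{X}$, the corestriction $\pi^{*} : C^{\infty}(\I,X) \to \Path{X}$ is a diffeomorphism.

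For the representability claim I would note that $\pi^{*}$ is precomposition with the fixed map $\pi$ and therefore commutes with postcomposition by any smooth map $h : X \to X'$; the family of diffeomorphisms $\pi^{*}$ thus constitutes a natural isomorphism $C^{\infty}(\I,-) \cong \Path{}$, which is precisely what it means for $\I$ to represent $\Path{}$. For the dimension I would invoke the remark preceding Lemma \ref{lem:subduction-universality}: a subduction cannot raise diffeological dimension, so $\dim\I \le \dim\real = 1$. For the opposite inequality I would use that the restriction of $\pi$ to the open interval $(0,1)$ is a plot of $\I$ that is injective and not locally constant, so the diffeology of $\I$ cannot be generated near an interior point by $0$-dimensional plots alone; hence $\dim\I \ge 1$, and therefore $\dim\I=1$.

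I expect essentially all of the difficulty to have been front-loaded into Propositions \ref{prop:subduction-dual} and \ref{prop:diffeologicalquotient}, so the transcription steps are routine. The one point deserving explicit attention is the lower bound $\dim\I \ge 1$, since the upper bound and the diffeomorphism are immediate; the only other thing to double-check is that the subspace diffeology that $\Path{X}$ inherits from $C^{\infty}(\real,X)$ coincides with the diffeology transported along the induction $\pi^{*}$, which is itself immediate from the definition of an induction.
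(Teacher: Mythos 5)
Your proposal is correct and follows essentially the same route as the paper: the theorem is stated precisely as a specialisation of Propositions \ref{prop:subduction-dual} and \ref{prop:diffeologicalquotient} to $L=\real$, $Y=\I=\real/\pi_{set}$, $\varpi=\pi$, and the paper offers no further argument beyond that citation, the remark $\dim Y\le\dim L$ for a subduction, and the definition $\Path{X}=\{u\mid u{\comp}\pi_{set}=u\}$. Your explicit verification of the lower bound $\dim\I\ge 1$ and of the naturality of $\pi^{*}$ only fills in details the paper leaves implicit.
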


As is well-known, there is a smooth function $\lambda : \real \to \real$ enjoying 
\begin{enumerate*}
\vitem\label{property:lambda1} $\lambda(t)=0$, $t \le 0$,
\hitem\label{property:lambda2} $\lambda(t)+\lambda(1{-}t)=1$,
\hitem\label{property:lambda3} $\lambda'(t)>0$, $0 < t < 1$.
\end{enumerate*}\par\noindent
Then by (\ref{property:lambda1}) and (\ref{property:lambda2}) above, we have $\lambda(t)=1$, $t \ge 1$ as well and hence $\lambda{\comp}\pi_{set}=\lambda$.
Thus $\lambda$ induces a smooth injection $\hat\lambda : \I \to \real$ satisfying $\lambda=\hat\lambda{\comp}\pi$.

\begin{defn}
We give a diffeology on a $q$-cubic set $\sigma \in C(n)$ by a subduction $\pi_{\sigma} : \real^{q} \to \sigma$, equipped with a smooth injection $\hat\lambda_{\sigma} : \sigma \to \real^{n}$ induced from some smooth map $\lambda_{\sigma} : \real^{q} \to \real^{n}$ such that $\lambda_{\sigma}=\hat\lambda_{\sigma}{\comp}\pi_{\sigma}$, by induction on $q$.
\begin{enumerate}
\item
$0$-cubic set is a one point space which has the trivial diffeology.
\item
Let $\sigma_{1}$ and $\sigma_{2}$ be {\cubicsettxt} with subductions $\pi_{i} : \real^{q_{i}} \to \sigma_{i}$, $i=1, \,2$, equipped with smooth injections $\hat\lambda_{i} : \sigma_{i} \to \real^{n}$ induced from some smooth maps $\lambda_{i} : \real^{q_{i}} \to \real^{n}$ such that $\lambda_{i}=\hat\lambda_{i}{\comp}\pi_{i}$, $i=1,\,2$.
\begin{enumerate}
\item
If $\sigma=\sigma_{1} \join \sigma_{2} \subset \real^{n}$, we have a subduction $\pi_{\sigma} = \phi_{\sigma_{1},\sigma_{2}}{\comp}(\pi_{1} \times \pi \times \pi_{2}) : \real^{q_{1}} \times \real \times \real^{q_{2}} \to \sigma_{1} \join \sigma_{2}$ and a smooth map $\lambda_{\sigma} = \phi_{\sigma_{1},\sigma_{2}}{\comp}(\lambda_{1} \times \lambda \times \lambda_{2}) : \real^{q_{1}} \times \real \times \real^{q_{2}} \to \real^{n}$, 
which induces a smooth injection $\hat\lambda_{\sigma} : \sigma \to \real^{n}$.
\item
If $\sigma = \sigma_{1} \timesab{L_{1}}{L_{2}} \sigma_{2} \subset \real^{n}$, we have a subduction $\pi_{\sigma} = \psi_{\sigma_{1},\sigma_{2}}{\comp}(\pi_{1} \times \pi_{2})$ and a smooth map $\lambda_{\sigma} = \psi_{\sigma_{1},\sigma_{2}}{\comp}(\lambda_{1} \times \lambda_{2}) : \real^{q_{1}} \times \real^{q_{2}} \to \real^{n}$, 
which induces a smooth injection $\hat\lambda_{\sigma} : \sigma \to \real^{n}$.
\end{enumerate}
\end{enumerate}
\end{defn}
For any $q$-cubic set $\sigma$, we clearly have $\dim\sigma=q$ diffeologically, and by Proposition \ref{prop:subduction-dual}, we obtain that $\pi_{\sigma}^{*} : C^{\infty}(\sigma,X) \rightarrow C^{\infty}(\real^{q},X)$ is an induction.

\begin{prop}
$\sigma$ is a smooth neighbourhood deformation retract of $\sigma \join \tau$.
\end{prop}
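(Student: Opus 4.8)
The plan is to excise the opposite face $\tau$ and to deformation retract what remains onto $\sigma$ by driving the join parameter to $0$. Recall that $\sigma\join\tau$ carries the quotient diffeology along the subduction $\pi_{\sigma\join\tau}=\phi_{\sigma,\tau}\comp(\pi_1\times\pi\times\pi_2):\real^{q_1}\times\real\times\real^{q_2}\to\sigma\join\tau$, that the middle factor records the join coordinate through $\pi:\real\to\I$, and that $\sigma$ and $\tau$ are the images of $\{s\le0\}$ and $\{s\ge1\}$, i.e.\ of join coordinate $0$ and $1$. First I would set $N=(\sigma\join\tau)\smallsetminus\tau$. Because the join construction forces $L_1\cap L_2=\emptyset$, we have $\sigma\cap\tau=\emptyset$, so no point of join coordinate in $[0,1)$ lies in $\tau$; hence $\pi_{\sigma\join\tau}^{-1}(N)$ is exactly the saturated open set $W=\{(\mathbb{u},s,\mathbb{v})\mid s<1\}$. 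Thus $N$ is a $D$-open neighbourhood of $\sigma$, and the corestriction $\pi_W:W\to N$ is again a subduction.

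Next I would lift the homotopy to $W$. With the smooth $\lambda$ fixed above ($\lambda\equiv0$ on $(-\infty,0]$, $\lambda\equiv1$ on $[1,\infty)$, $0\le\lambda\le1$), put
\[
\widetilde{H}\big((\mathbb{u},s,\mathbb{v}),r\big)=\big(\mathbb{u},\,s-\lambda(r),\,\mathbb{v}\big),\qquad(\mathbb{u},s,\mathbb{v})\in W,\ r\in\real .
\]
Since $\lambda\ge0$, we have $s-\lambda(r)\le s<1$, so $\widetilde H$ maps $W\times\real$ smoothly into $W$. The hard part will be to check that $\widetilde H_r$ descends along $\pi_W$, i.e.\ respects the relation defining $\sigma\join\tau$. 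Here one uses that two points of $W$ are identified exactly when their first coordinates agree under $\pi_1$ and their join coordinates $t=\pi(s)$, $t'=\pi(s')$ coincide, with the second coordinates constrained under $\pi_2$ only when this common value is positive --- and that $\pi$ is the identity on $(0,1)$, so a positive join coordinate forces $s=s'$ literally. For an identified pair one then argues in two cases: if the common join coordinate is $0$ then $s,s'\le0$, whence $s-\lambda(r),\,s'-\lambda(r)\le0$ keep the join coordinate at $0$ and the images share the first coordinate; if it is positive then $s=s'$, so the middle shift is identical and the second coordinates already match. In particular $\widetilde H$ never reaches join coordinate $1$, which is precisely why $\tau$ had to be removed. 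Hence $\pi_W\comp\widetilde H$ is constant on the fibres of the product subduction $\pi_W\times\id:W\times\real\to N\times\real$ provided by Lemma~\ref{lem:subduction-product}, and Lemma~\ref{lem:subduction-universality} produces a smooth $H:N\times\real\to N$ with $H\comp(\pi_W\times\id)=\pi_W\comp\widetilde H$.

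Finally I would read off the retraction properties directly from $\widetilde H$. At $r=0$ we have $\lambda(0)=0$, so $H_0=\id$; for $s\le0$ the shifted value $s-\lambda(r)$ stays $\le0$ and the first coordinate is untouched, so $H_r$ fixes $\sigma$ pointwise for every $r$; and at $r\ge1$ we have $\lambda(r)=1$, so $s-1<0$ for all $s<1$, giving $H_1(N)\subseteq\sigma$. Moreover $H_1$ corestricts to a smooth retraction $N\to\sigma$, being the descent of the smooth map $(\mathbb{u},s,\mathbb{v})\mapsto\pi_1(\mathbb{u})$ via Lemma~\ref{lem:subduction-universality}. This exhibits $H$ as a smooth strong deformation retraction of the open neighbourhood $N$ onto $\sigma$, which is the assertion. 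Should an NDR-pair be required, one complements $H$ with the smooth function $u:\sigma\join\tau\to\I$ descended from $(\mathbb{u},s,\mathbb{v})\mapsto\pi(s)$, for which $u^{-1}(0)=\sigma$ and $N=u^{-1}([0,1))$.
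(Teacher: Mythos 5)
Your proof is correct and follows essentially the same route as the paper: the paper likewise restricts the subduction to $\real^{q_1}\times(-\infty,1)\times\real^{q_2}$ over $O=\sigma\join\tau\smallsetminus\tau$ and descends the shift $h_s(\mathbb{x},t,\mathbb{y})=(\mathbb{x},t-\lambda(s),\mathbb{y})$ to a deformation of $O$ onto $\sigma$ relative to $\sigma$. Your write-up merely makes explicit the descent check and the endpoint verifications that the paper leaves implicit.
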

\begin{proof}
Since there is a subduction $\pi_{\sigma} = \phi_{\sigma_{1},\sigma_{2}}{\comp}(\pi_{1} \times \pi \times \pi_{2}) : \real^{q_{1}} \times \real \times \real^{q_{2}} \to \sigma_{1} \join \sigma_{2}$, we have a deformation $h_{s} : \real^{q_{1}} \times (-\infty,1) \times \real^{q_{2}} \to \real^{q_{1}} \times \real \times \real^{q_{2}}$ given by 
$$
h_{s}(\mathbb{x},t,\mathbb{y})=(\mathbb{x},t{-}\lambda(s),\mathbb{y}),\quad s \in \real,
$$
which is clearly smooth and thus inducing a smooth deformation $\hat{h}_{s} : O \to \sigma \join \tau$, $s \in \real$, where $O=\sigma \join \tau \smallsetminus \tau$.
By definition, it induces a smooth deformation of $O$ relative to $\sigma$, and $\sigma$ is a smooth deformation retract of $O$ a neighbourhood of $\sigma$.
\end{proof}

For a {\cubictxt} complex $\mathbb{K} \subset C(n)$, we introduce a {\smoothtxt} structure on the polyhedron $|\mathbb{K}|$ as $\vert{\mathbb{K}}\vert = \underset{\sigma\in \mathbb{K}}{\colim}\,\sigma$, which is called a {\smoothtxt} cubic polyhedron.
Then by definition, we obtain a smooth injection $\hat\lambda_{\mathbb{K}} : |\mathbb{K}| \to \real^{n}$ by collecting smooth maps $\hat\lambda_{\sigma} : \sigma \to \real^{n}$, $\sigma \in \mathbb{K}$.
Then we instantly see that $\dim{|\mathbb{K}|}=\dim{\mathbb{K}}$ diffeologically.

\begin{rem}
Following the above definition, we obtain a cube $\I^{n} = \left(\real/{\pi_{set}}\right)^{n}$ as a diffeological quotient of $\real^{n}$ which instantly implies $\dim\I^{n} = n$ diffeologically.
The cube $\I^{n}$ is set-theoretically the same as $[0,1]^{n}$, while its diffeology is different from the induced diffeology from $\real^{n}$ which is used in \cite{Haraguchi:2020wq}.
\end{rem}


\section{Associahedra as cubic complexes}

Let us introduce associahedra $K_{n} \subset \real^{n}$, $n \ge 1$ in $\topology{}$ as follows, which is slightly modified from the definition by Stasheff (see \cite{MR0158400}, \cite{MR1000378} or \cite{AX12115741}):
\def\maxmin{\operatorname{m}}
\begin{align*}
&K_{n} = \{\,(t_{1},\ldots,t_{n}) \midvert t_{1}\!=\!0 \le t_{k} \le k{-}1{-}\textstyle\sum_{i=1}^{k-1}t_{i} \ (1 \!<\! k \!<\! n), \ t_{n}\!=\!n{-}1-\textstyle\sum_{i=1}^{n-1}t_{i} \,\},
\end{align*}
or equivalently, we can describe the associahedron as follows.
\begin{align*}
&K'_{n} = \{\,(u_{1},\ldots,u_{n}) \midvert 0\!=\!u_{1} \le u_{2} \le \cdots \le u_{n-1} \le u_{n}\!=\!n{-}1, \ u_{k} \le k{-}1 \ (1 \!<\! k \!<\! n) \,\}.
\end{align*}
Let $H_{1}^{n-2}$\! : \!$x_{1}+{\cdots}+x_{n}\!=\!n{-}1, x_{1}\!=\!0$ be an affine space where $K_{n}$ is a convex body.
\begin{center}
\setlength\unitlength{.37mm}
\begin{picture}(110,220)(20,-30)
\linethickness{1.0mm}		
\thinlines			
\put(-50, 175)	{\vector(1,0){50}}
\put( 10, 175)	{\makebox(0,0)[cc]{$x_{2}$}}
\thicklines
\put(-10, 173)	{\line(0,1){4}}
\put(-50, 173)	{\line(0,1){4}}
\put(-10, 175)	{\line(-1,0){40}}
\put(-48, 165)	{\makebox(0,0)[r]{\tiny$(0,0,2)$}}
\put(-10, 161)	{\makebox(0,0)[l]{\tiny$(0,1,1)$}}
\put(-30, 145)	{\makebox(0,0)[c]{$(K_{3} \subset H_{1}^{1} \homeo \real)$}}
\thinlines
\put(-50, 50)	{\vector(0,1){50}}
\put(-50,110)	{\makebox(0,0)[cc]{$x_{3}$}}
\put(-50, 10)	{\vector(1,0){50}}
\put( 10, 10)	{\makebox(0,0)[cc]{$x_{2}$}}
\thicklines
\put(-50, 40)	{\line(0,1){50}}
\put(-50, 50)	{\line(0,-1){40}}
\put(-50, 90)	{\line(1,-1){40}}
\put(-10, 10)	{\line(0,1){40}}
\put(-10, 10)	{\line(-1,0){40}}
\put(-52, 50)	{\line(1,0){4}}
\put(-54, 92)	{\makebox(0,0)[r]{\tiny$(0,0,2,1)$}}
\put(-54, 50)	{\makebox(0,0)[r]{\tiny$(0,0,1,2)$}}
\put(-52, 10)	{\makebox(0,0)[r]{\tiny$(0,0,0,3)$}}
\put(-08,-03)	{\makebox(0,0)[l]{\tiny$(0,1,0,2)$}}
\put( -6, 50)	{\makebox(0,0)[l]{\tiny$(0,1,1,1)$}}
\put( -30,-20)	{\makebox(0,0)[c]{$(K_{4} \subset H_{1}^{2} \homeo \real^{2})$}}
\thinlines
\put(180,160)	{\vector(0,1){10}}
\put(180,180)	{\makebox(0,0)[cc]{$x_{4}$}}
\put(120,00)	{\vector(-2,-1){10}}
\put(100,-10)	{\makebox(0,0)[cc]{$x_{3}$}}
\put(220,16)	{\vector(3,-1){10}}
\put(240,10)	{\makebox(0,0)[cc]{$x_{2}$}}
\curvedashes[.3mm]{1,1,1}
\put(180,30)	{\curve(0,0 , 0,130)}
\put(180,30)	{\curve(0,0 , -60,-30)}
\put(180,30)	{\curve(0,0 , 40,-14)}
\put(150,15)	{\curve(0,0 , 0,40)}
\put(150,53)	{\curve(0,0 , 30,60)}
\put(150,55)	{\curve(0,0 , 0,45)}
\put(220,53)	{\curve(0,0 , -40,14)}
\put(170,-20)	{\makebox(0,0)[c]{$(K_{5} \subset H_{1}^{3} \homeo \real^{3})$}}
\thicklines
\put(120,00)	{\line(0,1){40}}
\put(120,40)	{\line(1,2){60}}
\put(220,100)	{\line(-2,3){40}}
\put(190,40)	{\line(-1,0){70}}
\put(190,00)	{\line(-1,0){70}}
\put(220,15)	{\line(0,1){85}}
\put(220,15)	{\line(-2,-1){30}}
\put(190,00)	{\line(0,1){40}}
\put(190,40)	{\line(1,2){30}}
\put(190,46)		{\makebox(0,0)[rc]{\tiny$(0,1,1,1,1)$}}
\put(147, 55)		{\makebox(0,0)[rc]{\tiny$(0,0,1,1,2)$}}
\put(147, 19)		{\makebox(0,0)[rc]{\tiny$(0,0,1,0,3)$}}
\put(176,159)		{\makebox(0,0)[rc]{\tiny$(0,0,0,3,1)$}}
\put(146,100)		{\makebox(0,0)[rc]{\tiny$(0,0,1,2,1)$}}
\put(116, 39)		{\makebox(0,0)[rc]{\tiny$(0,0,2,1,1)$}}
\put(116,  4)		{\makebox(0,0)[rc]{\tiny$(0,0,2,0,2)$}}
\put(183, 30)		{\makebox(0,0)[lc]{\tiny$(0,0,0,0,4)$}}
\put(183, 70)		{\makebox(0,0)[lc]{\tiny$(0,0,0,1,3)$}}
\put(183,112)		{\makebox(0,0)[lc]{\tiny$(0,0,0,2,2)$}}
\put(223,101)		{\makebox(0,0)[lc]{\tiny$(0,1,0,2,1)$}}
\put(223, 56)		{\makebox(0,0)[lc]{\tiny$(0,1,0,1,2)$}}
\put(223, 21)		{\makebox(0,0)[lc]{\tiny$(0,1,0,0,3)$}}
\put(193, -6)		{\makebox(0,0)[lc]{\tiny$(0,1,1,0,2)$}}
\end{picture}
\end{center}

Let $A(n)=\{\,(k,r,s) \in \numeric \midvert 1 \!\le\! k \!\le\! r, \ 2\!\le\!s\!=\!n{-}r{+}1\!\le\!n{-}1\,\}$.
Then the boundary of $K_{n}$ is the union of faces corresponding to elements in $A(n)$, given as follows.
\begin{align*}&
L_{k}(r,s) = \{\,(t_{1},\ldots,t_{n}) \!\in\! K_{n} \midvert (t_{k},\ldots,t_{k+s-2},t) \!\in\! K_{s}, \,t_{k+s-1} \!\ge\! t \!=\! s{-}1-\textstyle\sum_{i=k}^{k+s-2}t_{k+i-1}\,\}.
\end{align*}
\begin{center}
\setlength\unitlength{.37mm}
\begin{picture}(105,205)(20,-30)
\put(-70,00)	{\line(0,1){40}}
\put(-70,40)	{\line(1,2){60}}
\put( 30,100)	{\line(-2,3){40}}
\put(  0,40)	{\line(-1,0){70}}
\put(  0,00)	{\line(-1,0){70}}
\put( 30,15)	{\line(0,1){85}}
\put( 30,15)	{\line(-2,-1){30}}
\put(  0,00)	{\line(0,1){40}}
\put(  0,40)	{\line(1,2){30}}
\put(-41,101)	{\curve(0,0 , 3,-1.5)}
\put( 29, 52)	{\curve(0,0 , 2,0)}
\put(180,30)	{\line(0,1){130}}
\put(180,30)	{\line(-2,-1){60}}
\put(180,30)	{\line(3,-1){40}}
\put(150,15)	{\line(0,1){40}}
\put(150,53)	{\line(1,2){30}}
\put(150,55)	{\line(0,1){45}}
\put(220,53)	{\line(-3,1){40}}
\put(70,-20)	{\makebox(0,0)[c]{$( \partial K_{5} \homeo S^{2})$}}
\put( 15,26)	{\makebox(0,0)[c]{\tiny$L_{1}(4,2)$}}
\put(-35,20)	{\makebox(0,0)[c]{\tiny$L_{1}(3,3)$}}
\put(-15,80)	{\makebox(0,0)[c]{\tiny$L_{1}(2,4)$}}
\put(165,50)	{\makebox(0,0)[c]{\tiny$L_{2}(2,4)$}}
\put(166,106)	{\makebox(0,0)[c]{\tiny$L_{2}(3,3)$}}
\put(135,36)	{\makebox(0,0)[c]{\tiny$L_{2}(4,2)$}}
\put(200,43)	{\makebox(0,0)[c]{\tiny$L_{3}(3,3)$}}
\put(200,90)	{\makebox(0,0)[c]{\tiny$L_{3}(4,2)$}}
\put(180,15)	{\makebox(0,0)[c]{\tiny$L_{4}(4,2)$}}
\put(-15,159)		{\makebox(0,0)[rc]{\tiny$(0,0,0,3,1)$}}
\put(-44,101)		{\makebox(0,0)[rc]{\tiny$(0,0,1,2,1)$}}
\put(-74, 40)		{\makebox(0,0)[rc]{\tiny$(0,0,2,1,1)$}}
\put(-74, -2)		{\makebox(0,0)[rc]{\tiny$(0,0,2,0,2)$}}
\put( 03, 40)		{\makebox(0,0)[lc]{\tiny$(0,1,1,1,1)$}}
\put( 33,101)		{\makebox(0,0)[lc]{\tiny$(0,1,0,2,1)$}}
\put( 33, 53)		{\makebox(0,0)[lc]{\tiny$(0,1,0,1,2)$}}
\put( 33, 16)		{\makebox(0,0)[lc]{\tiny$(0,1,0,0,3)$}}
\put( 01, -5)		{\makebox(0,0)[lc]{\tiny$(0,1,1,0,2)$}}
\put(147, 55)		{\makebox(0,0)[rc]{\tiny$(0,0,1,1,2)$}}
\put(147, 19)		{\makebox(0,0)[rc]{\tiny$(0,0,1,0,3)$}}
\put(176,159)		{\makebox(0,0)[rc]{\tiny$(0,0,0,3,1)$}}
\put(146,101)		{\makebox(0,0)[rc]{\tiny$(0,0,1,2,1)$}}
\put(116, 39)		{\makebox(0,0)[rc]{\tiny$(0,0,2,1,1)$}}
\put(116, -2)		{\makebox(0,0)[rc]{\tiny$(0,0,2,0,2)$}}
\put(183, 30)		{\makebox(0,0)[lc]{\tiny$(0,0,0,0,4)$}}
\put(183, 70)		{\makebox(0,0)[lc]{\tiny$(0,0,0,1,3)$}}
\put(183,112)		{\makebox(0,0)[lc]{\tiny$(0,0,0,2,2)$}}
\put(223,101)		{\makebox(0,0)[lc]{\tiny$(0,1,0,2,1)$}}
\put(223, 53)		{\makebox(0,0)[lc]{\tiny$(0,1,0,1,2)$}}
\put(223, 16)		{\makebox(0,0)[lc]{\tiny$(0,1,0,0,3)$}}
\put(191, -5)		{\makebox(0,0)[lc]{\tiny$(0,1,1,0,2)$}}
\put(120,00)	{\line(0,1){40}}
\put(120,40)	{\line(1,2){60}}
\put(220,100)	{\line(-2,3){40}}
\put(190,00)	{\line(-1,0){70}}
\put(220,15)	{\line(0,1){85}}
\put(220,15)	{\line(-2,-1){30}}
\end{picture}
\end{center}
Following Stasheff \cite{MR0158400} (see also \cite{MR1000378} or \cite{AX12115741}), we introduce face operators $\partial_{k} : \real^{r} \times \real^{s} \to \real^{n}$, $r+s=n+1$, $1 \le k \le r$, as the following linear maps.\vspace{1ex}
$$\partial_{k}((t_{1},\ldots,t_{r}),(u_{1},\ldots,u_{s})) = 
\begin{cases}\,
(u_{1},\ldots,u_{s-1},u_{s}{+}t_{1},t_{2},\ldots,t_{r}),&\!\! k\!=\!1,
\\[.5ex]\,
(t_{1},\ldots,t_{k-1},u_{1},\ldots,u_{s-1},u_{s}{+}t_{k},t_{k+1},\ldots,t_{r}),&\!\! 2\!\le\!k\!\le\!r,
\end{cases}
$$\vskip1ex
If we restrict $\partial_{k}$ to $K_{r} \times K_{s}$, then we obtain $\partial_{k} : K_{r} \times K_{s} \homeo L_{k}(r,s) \subset K_{n} \subset \real^{n}$.
Now we choose an interior point of $K_{n}$, as $\mathbb{b}_{n}=(0,\fracinline1/2,\ldots,\fracinline1/2,\fracinline{n}/2) \in K_{n}$.
Then we see that $K_{n}$ and $L_{k}(r,s)$ are characterised by the following two conditions.\vspace{.5ex}
\begin{enumerate*}
\vitem
$\partial_{k} : K_{r} \times K_{s} \xrightarrow{\homeo} L_{k}(r,s)$, $(k,r,s) \in A(n)$,\vspace{.5ex}
\vitem
$K_{n} = \bigcup_{(k,r,s) \in A(n)} L_{k}(r,s) \join \mathbb{b}_{n}$.
\end{enumerate*}
\begin{expl}
$K_{2} = \{\mathbb{b}_{2}\}$, $K_{3} = L_{2}(2,2) \join \{\mathbb{b}_{3}\} \join L_{1}(2,2)$, and $K_{4} = L_{2}(2,3) \join \mathbb{b}_{4} \cup L_{2}(3,2) \join \mathbb{b}_{4} \cup L_{3}(3,2) \join \mathbb{b}_{4} \cup L_{1}(2,3) \join \mathbb{b}_{4} \cup L_{1}(3,2) \join \mathbb{b}_{4}$.
\end{expl}
Firstly, $K_{n}$ is the realisation of a {\cubicsettxt}, namely $\mathbb{K}(n)$, which begins with 
\begin{enumerate}
\item
$\mathbb{K}(1)=\{\emptyset\}$.
\end{enumerate}
Secondly, we define {\cubictxt} complexes $\mathbb{L}_{k}(r,s)$, $(k,r,s) \in A(n)$, assuming that $\mathbb{K}(r)$ and $\mathbb{K}(s)$ are given: 
let us denote $\mathbb{0}_{n}=(0,\ldots,0) \in \real^{n}$ and $H_{n}=\{(0,x_{2},\ldots,x_{n}) \in \real^{n} \midvert x_{2}+\cdots+x_{n}=0\}$.
Then we have the following two linear subspaces of $\real^{n}$:
\begin{align*}&
V_{1}=\partial_{k}(H_{r} \times \{\mathbb{0}_{s}\}) 
\quad\text{and}
\\&
V_{2}=\partial_{k}(\{\mathbb{0}_{r}\} \times H_{s}) 
\quad\text{with}\quad V_{1} \cap V_{2} = \{\mathbb{0}_{n}\}.
\end{align*}
Let $\mathbb{a}=\partial_{k}(\mathbb{b}_{r},\mathbb{b}_{s}) \in \real^{n}$.
Since $K_{n} \subset H_{n}+\mathbb{b}_{n}$, we obtain 
\begin{align*}&
L_{1} = V_{1}+\mathbb{a} = \partial_{k}((H_{r}\!+\!\mathbb{b}_{r}) \times \{\mathbb{b}_{s}\}) \supset \partial_{k}(K_{r} \times \{\mathbb{b}_{s}\}) \homeo K_{r},
\\&
L_{2} = V_{2}+\mathbb{a} = \partial_{k}(\{\mathbb{b}_{r}\} \times (H_{s}\!+\!\mathbb{b}_{s})) \supset \partial_{k}(\{\mathbb{b}_{r}\} \times K_{s}) \homeo K_{s}.
\end{align*}
Then we clearly have that $L_{1} \cap L_{2} = \{\mathbb{a}\}$.
Hence, when $\mathbb{K}(r)$ and $\mathbb{K}(s)$ have already been defined, we must obtain the following.
\begin{enumerate}
\addtocounter{enumi}{1}
\item
$\mathbb{L}_{k}(r,s) = \partial_{k}(\mathbb{K}(r) \times \{\mathbb{b}_{s}\})\,\,{}_{L_{1}\!\!}\times_{L_{2}} \partial_{k}(\{\mathbb{b}_{r}\} \times \mathbb{K}(s))$, $(k,r,s) \in A(n)$.
\end{enumerate}\vspace{.5ex}

Thirdly, we define a {\cubictxt} complex $\mathbb{K}(n)$: 
let $V'_{1}=V_{1}+V_{2}$, $V'_{2}=\{\mathbb{0}_{n}\}$, $\mathbb{a}'_{1}=\mathbb{a}$ and $\mathbb{a}'_{2}=\mathbb{b}_{n}$.
Then ${L}_{k}(r,s) \subset V'_{1}+\mathbb{a}'_{1}$, $\mathbb{b}_{n}\in V'_{2}+\mathbb{a}'_{2}$ and $V'_{1} \cap V'_{2} = \emptyset$.
Hence, when $\mathbb{L}_{k}(r,s)$, $(k,r,s) \in A(n)$, have already been defined, we must obtain the following.
\begin{enumerate}
\addtocounter{enumi}{2}
\item
$\mathbb{K}(n) = \bigcup_{(k,r,s) \in A(n)} \mathbb{L}_{k}(r,s) \join \mathbb{b}_{n}$.
\end{enumerate}
Thus $\mathbb{K}(n)$, $n \ge 1$ can inductively be defined by the above formulas (1), (2) and (3).
Then we can easily see that there is a natural homeomorphism from $|\mathbb{K}(n)|$ to $K_{n}$.

By slightly modifying the definition in Stasheff \cite{MR0158400}, we obtain the following degeneracy operators $s_{j} : K_{n} \to K_{n-1}$, $1 \!\le\! j \!\le\! n$, $n \!\ge\! 2$ (see \cite{MR1000378} or \cite{AX12115741}).
\begin{align*}&
s_{j}((1{-}t){\cdot}\partial_{k}(\rho,\sigma) + t{\cdot}\mathbb{b}_{n}) = (1{-}t){\cdot}s_{j}\comp\partial_{k}(\rho,\sigma) + t{\cdot}\mathbb{b}_{n-1},
\end{align*}
where $s_{j}{\comp}\partial_{k} : K_{r} \times K_{s} \to K_{n-1}$ is given by\vspace{1ex}
$$
s_{j}\comp\partial_{k}(\rho,\sigma) = 
\begin{cases}\partial_{k-1}(r{-}1,s){\comp}(s_{j}\rho \times \sigma),& j \!<\! k, \ r\!>\!2,
\\[-.0ex]
\sigma,& j\!=\!1, \,k\!=\!2, \,r\!=\!2,
\\[.5ex]\partial_{k}(r,s{-}1){\comp}(\rho \times s_{j-k+1}\sigma),& k \!\le\! j \!<\! k{+}s, \ r \!<\! n{-}1,
\\[-.0ex]
\rho,& k\!\le\!j\!\le\!k{+}1, \ r\!=\!n{-}1,
\\[.5ex]\partial_{k-1}(r{-}1,s){\comp}(s_{j-s+1}\rho \times \sigma),& k{+}s \!\le\! j \!\le\! n, \ r\!>\!2,
\\[-.0ex]
\sigma,& j\!=n, \,k\!=\!1, \,r\!=\!2.
\end{cases}
$$\vskip1ex\noindent
Thus we may suppose that $s_{j} : K_{n} \to K_{n-1}$, $1 \le j \le n$, $n \ge 2$ is a realisation of a cubic map, which is denoted again by $s_{j} : \mathbb{K}(n) \to \mathbb{K}(n-1)$.

\begin{rem}
J.~L.~Loday \cite{MR2342835,MR3221534} gave a nice realisation of an associahedron and its triangulation using Tamari ordering on vertices, which could give a natural smooth structure on such triangulation in terms of trees.
\end{rem}

\section{$A_{\infty}$-form in $\diffeology$}

In this section, let us concentrate on our cubic complex of assosiahedra.
The following is obtained by induction on $n$.

\begin{thm}
$\partial_{k} : \mathbb{K}(r) \times \mathbb{K}(s) \to \mathbb{K}(n)$, $r+s=n+1$, $1 \le k \le r$ and $s_{j} : \mathbb{K}(n) \to \mathbb{K}(n{-}1)$, $1 \le j \le n \ge 2$ are smooth cubic maps.
\end{thm}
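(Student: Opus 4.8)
The plan is to prove both assertions at once by induction on $n$, feeding on the inductive construction of $\mathbb{K}(n)$ through formulas (1)--(3) and the subduction calculus of Section 1; the low cases are trivial, the complexes $\mathbb{K}(1),\mathbb{K}(2)$ being a point and the index set $A(n)$ being empty for $n\le 2$.

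First I would dispose of the face operators. Because $\partial_{k}:\real^{r}\times\real^{s}\to\real^{n}$ is affine, one has $\partial_{k}(\rho,\sigma)=\partial_{k}(\rho,\mathbb{b}_{s})+\partial_{k}(\mathbb{b}_{r},\sigma)-\partial_{k}(\mathbb{b}_{r},\mathbb{b}_{s})$, which exhibits $\partial_{k}$ as the composite $\psi{\comp}(\alpha\times\beta)$, where $\alpha(\rho)=\partial_{k}(\rho,\mathbb{b}_{s})$ and $\beta(\sigma)=\partial_{k}(\mathbb{b}_{r},\sigma)$ identify $\mathbb{K}(r)$ and $\mathbb{K}(s)$ with the cubic complexes $\partial_{k}(\mathbb{K}(r)\times\{\mathbb{b}_{s}\})$ in $L_{1}$ and $\partial_{k}(\{\mathbb{b}_{r}\}\times\mathbb{K}(s))$ in $L_{2}$. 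The maps $\alpha,\beta$ are affine isomorphisms respecting the join/product decomposition, hence diffeomorphisms of cubic complexes, while $\psi$ is a diffeomorphism since $\psi{\comp}(\pi_{1}\times\pi_{2})=\pi_{\sigma}$ is a subduction and both $\psi,\psi^{-1}$ become smooth through Lemma \ref{lem:subduction-universality}. Thus $\partial_{k}$ is a diffeomorphism onto $\mathbb{L}_{k}(r,s)$; composing with the smooth inclusion $\mathbb{L}_{k}(r,s)\hookrightarrow\mathbb{K}(n)$ yields smoothness, and the cubic-map property is precisely the product-and-inclusion case of the Proposition on cubic maps.

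For the degeneracies I would argue cone-wise over $K_{n}=\bigcup_{(k,r,s)\in A(n)}L_{k}(r,s)\join\mathbb{b}_{n}$, where on each cone the definition reads $s_{j}=(s_{j}{\comp}\partial_{k})\join\mathbb{b}_{n-1}$. Each branch of the case analysis defining $s_{j}{\comp}\partial_{k}:K_{r}\times K_{s}\to K_{n-1}$ is a product of a lower face operator $\partial_{k'}$, a lower degeneracy $s_{j'}$, and identity or trivial maps (the two ``$\rho$'' and ``$\sigma$'' branches being projections, themselves cubic as a product with a trivial map), all of level $<n$ and so smooth cubic by the inductive hypothesis; composites and products of smooth cubic maps are again smooth cubic. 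The cone of such a map is smooth cubic because the join subduction $\phi_{\sigma_{1},\sigma_{2}}{\comp}(\pi_{1}\times\pi\times\pi_{2})$ intertwines source and target, the interpolation along the cone parameter being smoothed by $\lambda$ exactly as in the deformation-retract Proposition.

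Finally I would assemble the cones: the cells $L_{k}(r,s)\join\mathbb{b}_{n}$ cover $K_{n}$, and on their pairwise overlaps the defining formulas agree by Stasheff's identities, which is what makes $s_{j}$ well defined as the realisation of a cubic map in the first place. Since $|\mathbb{K}(n)|=\colim_{\sigma\in\mathbb{K}(n)}\sigma$ carries the colimit diffeology, a map out of it is smooth as soon as its restriction to every cubic cell is, so the cone-wise smoothness above gives global smoothness of $s_{j}$. The main obstacle is the cone step: one must check that the piecewise cone construction is genuinely smooth at the apex $\mathbb{b}_{n}$ and across the join parameter, and this is exactly where the non-subspace diffeology of $\I=\real/\pi_{set}$ together with the auxiliary smooth function $\lambda$ carry the argument, in place of any naive convex-combination reasoning.
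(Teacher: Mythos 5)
Your proposal is correct and takes essentially the same route as the paper, whose entire proof is the one-line remark that the theorem ``is obtained by induction on $n$'': your induction through the join/product construction of $\mathbb{K}(n)$, treating $\partial_{k}$ via the product subduction $\psi_{\sigma_{1},\sigma_{2}}{\comp}(\pi_{1}\times\pi_{2})$ and the affine identifications onto $L_{1}$, $L_{2}$, and treating $s_{j}$ cone-wise over $K_{n}=\bigcup L_{k}(r,s)\join\mathbb{b}_{n}$ with assembly through the colimit diffeology, is exactly the intended argument. The only small inaccuracy is that smoothness across the cone parameter is supplied by the quotient diffeology of $\I=\real/\pi_{set}$ through the factor $\pi$ in the join subduction $\phi_{\sigma_{1},\sigma_{2}}{\comp}(\pi_{1}\times\pi\times\pi_{2})$ (a plot of the source lifts locally and is pushed forward to a plot of the target), not by the function $\lambda$, which enters only in the embeddings $\hat\lambda_{\sigma}$ and in the deformation-retract proposition.
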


We now state the smooth version of a strict unital $A_{\infty}$-space: let $G$ be a diffeological space with a base point $e \in G$, which is called a unit of $G$.
\begin{defn}[Stasheff \cite{MR0158400}]\label{defn:stasheff63}
$G$ is called a smooth (strict unital) $A_{\infty}$-space, if there is a series of smooth maps $\{ M(n) : |\mathbb{K}(n)| \times G^{n} \to G \}_{n \ge 2}$ ($A_{\infty}$-form) satisfying 
\begin{enumerate}\vspace{.5ex}
\item
$M(n)(\partial_{k}(\rho,\sigma);g_{1},\ldots,g_{n})=M(r)(\rho;g_{1},\ldots,M(s)(\sigma;g_{k},\ldots,g_{k+s-1}),\ldots,g_{n})$.
\vspace{.5ex}\item
$M(n)(\tau;g_{1},\ldots,g_{j-1},e,g_{j+1},\ldots,g_{n})=M(n{-}1)(s_{j}(\tau);g_{1},\ldots,g_{j-1},g_{j+1},\ldots,g_{n})$.
\end{enumerate}
\end{defn}

An $A_{\infty}$-space allowing homotopy unit was introduced in \cite{MR0270372} as in the following definition, which shall be referred as h-unital $A_{\infty}$-space in this article.

\begin{defn}[Stasheff \cite{MR0270372}]\label{defn:stasheff70}
$G$ is called a smooth h-unital $A_{\infty}$-space, if there is a series of smooth maps $\{ M(n) : |\mathbb{K}(n)| \times G^{n} \to G \}_{n \ge 2}$ ($A_{\infty}$-form) satisfying 
\begin{enumerate}\vspace{.5ex}
\item[(1)]
$M(n)(\partial_{k}(\rho,\sigma);g_{1},\ldots,g_{n})=M(r)(\rho;g_{1},\ldots,M(s)(\sigma;g_{k},\ldots,g_{k+s-1}),\ldots,g_{n})$.
\vspace{.5ex}\item[(2')\!]\,%
$M_{2}(e,g) \simeq M_{2}(g,e) \simeq g$ the identity.
\end{enumerate}
\end{defn}
By \cite[Theorem 1.4]{AX12115741}, these definitions are not the same but equivalent up to homotopy.
In view of \cite{AX12115741}, we give two notions of internal precategories: 
\begin{defn}
A pair of diffeological spaces $G=(G,X)$ is called an internal precategory, if it is equipped with three smooth (structure) maps
\begin{align*}&
\sigma : G \to X,
\quad
\tau : G \to X,
\quad
\iota : X \to G
\end{align*}
with relations $\sigma{\comp}\iota=\tau{\comp}\iota=\id$.
\end{defn}
For an internal precategory $G=(G,X)$, we define $\{G^{n}; \sigma_{n}, \tau_{n} : G^{n} \to X\}_{n \ge 1}$ by induction:
\begin{enumerate*}
\vitem $G^{1}=G$, \,$\sigma_{1}=\sigma$ and $\tau_{1}=\tau$
\vitem[(n{+}1)] $G^{n+1}=G^{n} \times_{X} G$, \,$\sigma_{n+1}(\chi,h)=\sigma_{n}(\chi)$ and $\tau_{n+1}(\chi,h)=\tau(h)$,
\end{enumerate*}\par\noindent
where $G^{n} \times_{X} G$ is the pullback of $\tau_{n}$ and $\sigma$, that is, $G^{n} \times_{X} G = \{\,(\chi,h) \in G^{n} \times G \midvert \tau_{n}(\chi)\!=\!\sigma(h)\,\}$ is a subspace of $G^{n} \times G$.
\begin{defn}\label{defn:internal-higher-associativity}
An internal precategory $G=(G,X)$ in \diffeology{} is called an internal (strict unital) smooth $A_{\infty}$-category,\vspace{.5ex} if there is a series of smooth maps $\{ M(n) : |\mathbb{K}(n)| \times G^{n} \to G \}_{n \ge 2}$ ($A_{\infty}$-form) satisfying the conditions (0), (1) and (2):
\begin{enumerate}
\vskip1ex
\item[(0)]
$\sigma{\comp}M(n)(\tau;g_{1},\dots,g_{n})=\sigma(g_{1})$ and $\tau{\comp}M(n)(\tau;g_{1},\dots,g_{n})=\tau(g_{n})$.
\vspace{.5ex}\item%
$M(n){\comp}(\partial_{k} \times \id^{n}) = M(r){\comp}(\id \times \id^{k-1} \times_{X} M(s) \times_{X} \id^{r-k}){\comp}(\id \times T_{k}) : |\mathbb{K}(r)| \times |\mathbb{K}(s)| \times G^{n} \to G$,\!\!\!\!\!\!\!\!\!\!
\begin{equation*}
\begin{diagram}
\node{|\mathbb{K}(r)| \times |\mathbb{K}(s)| \times G^{n}}
\arrow[2]{e,b}{\partial_{k} \times \id^{n}}
\arrow{s,l}{\id \times T_{k}}
\node{}
\node{|\mathbb{K}(n)| \times G^{n}}
\arrow[2]{s,r}{M(n)}
\\
\node{|\mathbb{K}(r)| \times G^{k-1} \times_{X} (|\mathbb{K}(s)| \times G^{s}) \times_{X} G^{r-k}}
\arrow{s,l}{\id \times \id \times_{X} M(s) \times_{X} \id}
\\
\node{|\mathbb{K}(r)| \times G^{r}}
\arrow[2]{e,t}{M(r)}
\node{}
\node{G,}
\end{diagram}
\end{equation*}
where $T_{k} : |\mathbb{K}(s)| \times G^{n} = |\mathbb{K}(s)| \times G^{k-1} \times_{X} G^{s} \times_{X} G^{r-k} \to G^{k-1} \times_{X} (|\mathbb{K}(s)| \times G^{s}) \times_{X} G^{r-k}$ is given by $T_{k}(a,\mathbb{x},\mathbb{y},\mathbb{z}) = (\mathbb{x},a,\mathbb{y},\mathbb{z})$.
\vspace{.5ex}\item%
$M(n){\comp}(\id \times \id^{j-1} \times_{X} \iota \times_{X} \id^{n-j}) = M(n{-}1) {\comp} (s_{j} \times \id^{j-1} \times_{X} \id^{n-j}) : |\mathbb{K}(n)| \times G^{n-1} \to G$.
\begin{equation*}
\begin{diagram}
\node{|\mathbb{K}(n)| \times G^{j-1} \times_{X} G^{n-j}}
\arrow{e,J}
\arrow{s,l}{s_{j} \times \id \times_{X} \id}
\node{|\mathbb{K}(n)| \times G^{n}}
\arrow{s,r}{M(n)}
\\
\node{|\mathbb{K}(n{-}1)| \times G^{j-1} \times_{X} G^{n-j}}
\arrow{e,b}{M(n{-}1)}
\node{G,}
\end{diagram}
\end{equation*}
\end{enumerate}
\end{defn}

\begin{defn}\label{defn:internal-higher-associativity'}
We call $G=(G,X)$ an internal smooth h-unital $A_{\infty}$-category in \diffeology{}, if there is a series of smooth maps $\{M(n) : |\mathbb{K}(n)| \times G^{n} \to G\}_{n \ge 2}$ ($A_{\infty}$-form) satisfying the conditions (0) and (1) in Definition \ref{defn:internal-higher-associativity} and (2') below.
\begin{enumerate}
\item[(2')\!]\,%
$M(2)|_{\{\mathbb{b}_{2}\} \times \{e\} \times G} \simeq M(1){\comp}(s_{1} \times \id)$ and $M(2)|_{\{\mathbb{b}_{2}\} \times G \times \{e\}} \simeq M(1){\comp}(s_{2} \times \id)$ the identity.
\end{enumerate}
\end{defn}

\section{Path space with usual concatenation}

In this section, we work in \diffeology{} making some additional assumptions on the diffeology $\mathcal{D}(X)$ of a diffeological space $X$.
Let us denote by $\mathcal{F}(X) = C^{\infty}(X,\real)$ the set of {\smoothtxt} functions on $X$ to $\real$.
Then we define a superset $\mathcal{D}'(X)$ of $\mathcal{D}(X)$ as the set of parametrisations $P$ on $X$ satisfying that $\varphi{\comp}P$ is {\smoothtxt} for any $\varphi \in \mathcal{F}(X)$.
\begin{defn}[J.~Watts \cite{MR3153238} (see also \cite{MR3025051})]\label{defn:reflexive}
A diffeological space $X$ is said to be reflexive, if $\mathcal{D}(X) = \mathcal{D}'(X)$.
\end{defn}

\begin{thm}[\protect{\cite[Exercise 79]{MR3025051}}]\label{thm:reflexivesmoothCW}
A manifold is reflexive in $\diffeology{}$.
\end{thm}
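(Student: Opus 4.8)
The plan is to prove the two inclusions $\mathcal{D}(M) \subseteq \mathcal{D}'(M)$ and $\mathcal{D}'(M) \subseteq \mathcal{D}(M)$ separately, where $M$ denotes the manifold in question. The first inclusion is immediate: if $P \in \mathcal{D}(M)$ is a plot and $\varphi \in \mathcal{F}(M) = C^{\infty}(M,\real)$, then $\varphi{\comp}P$ is a composite of smooth maps, hence smooth, so $P \in \mathcal{D}'(M)$. All the content lies in the reverse inclusion.

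For $\mathcal{D}'(M) \subseteq \mathcal{D}(M)$, let $P : U \to M$ be a parametrisation on a domain $U$ such that $\varphi{\comp}P$ is smooth for every $\varphi \in \mathcal{F}(M)$; I must show $P$ is a plot. Since membership in $\mathcal{D}(M)$ is a local condition by (D3), it suffices to verify smoothness of $P$ in a neighbourhood of each $u_{0} \in U$. I would first record that $P$ is continuous: the manifold topology of $M$ coincides with the initial topology induced by the family $\mathcal{F}(M)$ (this is where paracompactness guarantees that global smooth functions separate points and detect the topology), so continuity of all the maps $\varphi{\comp}P$ forces continuity of $P$.

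Fix $u_{0} \in U$ and set $p = P(u_{0})$. Choose a chart $(V,\psi)$ around $p$ with coordinate functions $\psi = (\psi_{1},\ldots,\psi_{m}) : V \to \real^{m}$, and, using continuity of $P$, shrink to a neighbourhood $U_{0} \ni u_{0}$ with $P(U_{0}) \subset V$. The key step is to replace each \emph{local} coordinate function $\psi_{i}$ by a \emph{global} smooth function: choose a bump function $\chi \in C^{\infty}(M,\real)$ supported in $V$ and equal to $1$ on a smaller neighbourhood $V' \ni p$ (again invoking paracompactness), and set $\tilde\psi_{i} = \chi\cdot\psi_{i}$, extended by $0$ outside $V$, so that $\tilde\psi_{i} \in \mathcal{F}(M)$ and $\tilde\psi_{i}$ agrees with $\psi_{i}$ on $V'$. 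On $P^{-1}(V') \cap U_{0}$ we then have $\psi_{i}{\comp}P = \tilde\psi_{i}{\comp}P$, which is smooth by hypothesis. Hence $\psi{\comp}P$ is smooth into $\real^{m}$ near $u_{0}$, and since $\psi$ is a diffeomorphism onto its image, $P = \psi^{-1}{\comp}(\psi{\comp}P)$ is smooth near $u_{0}$. Locality then yields $P \in \mathcal{D}(M)$.

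The main obstacle is the passage from local to global: the hypothesis only supplies composites with members of $\mathcal{F}(M)$, i.e.\ with \emph{globally} defined smooth functions, whereas coordinates are only locally defined. Manufacturing enough global smooth functions that reproduce the local coordinate data is exactly where paracompactness is indispensable, through the existence of bump functions (equivalently, partitions of unity). An alternative, perhaps slicker, route bypasses charts altogether: by the Whitney embedding theorem (again using paracompactness) fix a smooth embedding $\iota = (\iota_{1},\ldots,\iota_{N}) : M \hookrightarrow \real^{N}$ with each $\iota_{j} \in \mathcal{F}(M)$; the hypothesis makes $\iota{\comp}P$ smooth, and since $M$ is embedded in $\real^{N}$, a map into $M$ is smooth as soon as it is smooth into the ambient $\real^{N}$, so $P$ is smooth. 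Either way the decisive input is the abundance of global smooth functions on a paracompact manifold.
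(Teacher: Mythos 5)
Your argument is correct, and in fact the paper offers no proof of this statement at all --- it is quoted as \cite[Exercise 79]{MR3025051} --- so there is nothing internal to compare against. Your main line is the standard one: the trivial inclusion $\mathcal{D}(M)\subset\mathcal{D}'(M)$, then continuity of $P$ from the fact that the manifold topology is the initial topology for $\mathcal{F}(M)$ (bump functions give, inside any open $W\ni p$, a set $\{\chi>1/2\}$ that is open in the initial topology), then globalisation of chart coordinates by $\tilde\psi_i=\chi\cdot\psi_i$ so that $\psi{\comp}P=\tilde\psi{\comp}P$ is smooth on the open set $P^{-1}(V')$, and finally (D3) to conclude. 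All steps are sound, and paracompactness enters exactly where you say it does. The only caveat concerns your ``slicker'' alternative: a paracompact manifold need not be second countable (it may have uncountably many components), so a global Whitney embedding $M\hookrightarrow\real^N$ need not exist; one would have to first observe that the continuous image $P(U)$ of the domain $U$ meets only countably many components and embed that open submanifold instead. Since that route is offered only as an aside and your chart-based argument is complete, this does not affect the proof.
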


Let us introduce the following notion for a point $\mathbb{a}$ in $X$.

\begin{defn}\label{defn:reflexivesmoothCW}
A diffeological space $X$ is said to be reflexive at $\mathbb{a} \in X$, if there is a $D$-open neighbourhood $U$ of $\mathbb{a}$, which is reflexive as a diffeological space.
\end{defn}

\begin{expl}\label{example:reflexive}
A manifold is reflexive at any point, and hence a smooth CW complex of finite dimension is reflexive at an interior point of a top cell.
\end{expl}

\begin{expl}\label{example:non-reflexive}
Let $f : \real \to \I$ be a map given by
$$
f(t)=\pi(\max\{0,\sqrt{t}\}),
$$
which is not smooth at $t\!=\!0$.
In fact, if $f$ is smooth, $f$ can be expressed as $f=\pi{\comp}\phi$ near $t\!=\!0$ by a smooth map $\phi : \real \to \real$.
Then we have $\phi(t)=\sqrt{t}$ for small $t>0$, and hence $\phi'(0)=\underset{t \to +0}\lim \phi'(t)=+\infty$.
It contradicts to the smoothness of $\phi$ at $t=0$.
On the other hand, for any smooth map $g : \I \to \real$, the composition $\psi=g{\comp}\pi : \real \to \real$ is smooth on $\real$, and constant on $(-\infty,0]$.
Thus we have 
$$
\underset{t \to +0}\lim\,\psi^{(r)}(t)=\psi^{(r)}(0)=\underset{t \to -0}\lim\,\psi^{(r)}(t)=0\quad\text{for all $r \!\ge\! 1$.}
$$\vskip-1ex\noindent
By applying L'H\^opital's rule many times, we obtain that $\underset{t \to +0}\lim\,\psi^{(r)}(t)/t^{n}=0$ for all $r, \,n \ge 1$.
Then by induction, we can express $(\psi{\comp}f)^{(r)}(t)$ as the following form:
$$
(\psi{\comp}f)^{(r)}(t) = \underset{j=0}{\overset{r}{\sum}}\,P_{\!r,j}(1/\sqrt{t}){\cdot}\psi^{(j)}(\sqrt{t}), \,t>0,\quad\text{for all $r > 1$,}
$$\vskip-1ex\noindent
where $P_{\!r,j}(x)$ is a polynomial on $x$.
Again by applying L'H\^opital's rule, we obtain that $(\psi{\comp}f)^{(r)}(0)$ exists and equals to $\underset{t \to 0}\lim\,(\psi{\comp}f)^{(r)}(t)=0$ for all $r \!\ge\! 1$, and hence $\psi{\comp}f$ is smooth at $t\!=\!0$.
Thus $f \in \mathcal{D}'(\I)$ while $f \not\in \mathcal{D}(\I)$.
So, $\I$ is not reflexive (at $t\!=\!0$).
In contrast, $\I$ is reflexive at any point $t \in (0,1) \subset \I$.
\end{expl}

Let us recall the following diffeological subspace of $\Paths(X)$.
$$
\Path{X}=\{\, u \in \Paths(X) \midvert u{\comp}\pi_{set}=u\,\} \cong C^{\infty}(\I,X).
$$
From now on, we often identify $\Path{X}$ with $C^{\infty}(\I,X)$ without mentioning $\pi^{*}$.

\begin{prop}\label{prop:higher-derivaltive}
For $u \in \Path{X} = C^{\infty}(\I,X)$ and $\varphi \in C^{\infty}(X,\real)$,
 we obtain
\begin{equation*}
\nder{(\varphi{\comp}u)}by{t}times{n} = 0 \ \ \text{on} \ \ (-\infty,0] \cup [1,\infty) \ \ \text{for all} \ \ n > 0.
\end{equation*}
\end{prop}
\begin{proof}
Since $u$ is constant on $(-\infty,0)$ and on $(1,\infty)$, so is $\varphi{\comp}u$ and we obtain 
\begin{equation*}
\der{(\varphi{\comp}u)}by{t} = 0 \ \ \text{on} \ \ (-\infty,0) \cup (1,\infty).
\end{equation*}
Hence by iterating differentiations, we obtain
\begin{equation*}
\nder{(\varphi{\comp}u)}by{t}times{n} = 0 \ \ \text{on} \ \ (-\infty,0) \cup (1,\infty) \ \ \text{for all} \ \ n > 0.
\end{equation*}
Because all the derivatives are continuous, we obtain the propostion.
\end{proof}

Since $\Path{X} = C^{\infty}(\I,X)$ is the left adjoint of the product functor with $\I$, $\Path{X}$ is an internal precategory with the following smooth structure maps. 
\begin{align*}&
\sigma=\sigma_{X} : \Path{X} \to X \iff \sigma_{X}(u)=u(0),\quad u \in \Path{X},
\\
&\tau=\tau_{X} : \Path{X} \to X \iff \tau_{X}(u)=u(1),\quad u \in \Path{X},
\\
&\iota=\iota_{X} : X \to \Path{X} \iff \iota_{X}(\mathbb{x})(t)=\mathbb{x},\quad \mathbb{x} \in X \ \& \ t \in \real.
\intertext{The internal precategory $\Path{X}$ is equipped with the following structure map $\mu=\mu_{X}$ called a concatenation, which could fail to be well-defined in general:}
&\mu=\mu_{X} : \Path{X} \times_{X} \Path{X} \dashrightarrow  \Path{X}
\\[-.0ex]&\qquad\qquad
\iff \mu_{X}(u,v)(t) = 
\begin{cases}\,
u(2t),		& t\le\fracinline1/2,
\\\,
v(2t{-}1),	& t\ge\fracinline1/2,
\end{cases} \ \ t \in \real,
\end{align*}
where $\Path{X} \times_{X} \Path{X}$ denotes the pullback of $\tau : \Path{X} \to X$ and $\sigma : \Path{X} \to X$:
$$
\Path{X} \times_{X} \Path{X} = \{\,(u,v) \in \Path{X} \times \Path{X} \midvert \tau(u)=\sigma(v)\,\}.
$$
\begin{defn}%
We define the following subspaces of $\Path{X} \subset C^{\infty}(\real,X)$.
\begin{enumerate}
\item
$\Path{X;\mathbb{a},\mathbb{b}} = \{\, u \in \Path{X} \midvert u(0)=\mathbb{a} \ \& \ u(1)=\mathbb{b} \,\}$.
\item
$\Loop{X,\mathbb{a}} = \Path{X;\mathbb{a},\mathbb{a}}$ and $\Loop{X} = \Path{X;\mathbb{\ast},\mathbb{\ast}}$.
\end{enumerate}
\end{defn}

Let $\mathbb{a}, \mathbb{b}, \mathbb{c} \in X$.
In the remainder of this section, we assume that $X$ is reflexive at $\mathbb{b} \in X$.
For any two paths $u \in \Path{X;\mathbb{a},\mathbb{b}}$ and $v \in \Path{X;\mathbb{b},\mathbb{c}}$, and any function $\varphi \in \mathcal{F}(X)$, both $u$ and $v$ are smooth, and $\varphi{\comp}u$ and $\varphi{\comp}v$ are {\smoothtxt} functions in the ordinary sense. 
The function $\varphi{\comp}\mu(u,v)$ can be described as follows: 
\begin{equation}
\varphi{\comp}\mu(u,v)(t) = \mu_{\real}(\varphi{\comp}u,\varphi{\comp}v)(t) = \begin{cases}\,
\varphi{\comp}u(2t),		& t\le\fracinline1/2,
\\\,
\varphi{\comp}v(2t{-}1),	& t\ge\fracinline1/2.
\end{cases} \ \ t \in \real.
\tag{A}\label{eq:concat-path}
\end{equation}
Then, $\varphi{\comp}\mu(u,v) = \mu_{\real}(\varphi{\comp}u,\varphi{\comp}v)$ is smooth:
by Proposition \ref{prop:higher-derivaltive}, we obtain
\begin{equation*}
\nder{(\varphi{\comp}u)}by{t}times{n} = 0 \ \ \text{and} \ \ \nder{(\varphi{\comp}v)}by{t}times{n} = 0 \ \ \text{on} \ \ (-\infty,0] \cup [1,\infty) \ \ \text{for all} \ \ n > 0.
\end{equation*}
So we define $\varphi_{1}, \varphi_{2} \in \Path{X}$ by $\varphi_{1}(t) =\varphi{\comp}u(2t)$ and $\varphi_{2}(t) =\varphi{\comp}v(2t{-}1)$, $t \in \real$ to obtain $\varphi_{1} = \varphi{\comp}\mu(u,k(\mathbb{b})) = \mu_{\real}(\varphi{\comp}u,k(\varphi(\mathbb{b})))$ and $\varphi_{2} = \varphi{\comp}\mu(k(\mathbb{b}),v) = \mu_{\real}(k(\varphi(\mathbb{b})),\varphi{\comp}v)$, where $k(\mathbb{b})$ and $k(\varphi(\mathbb{b}))$ are the constant maps at $\mathbb{b} \in X$ and $\varphi(\mathbb{b}) \in \real$, respectively.
Since $\varphi_{1}$ and $\varphi_{2}$ are compositions of smooth maps, they are smooth satisfying 
\begin{align*}&
\nder{\!\varphi_{1}}by{t}times{n} = 0 \ \ \text{on} \ \ (-\infty,0] \cup [\fracinline1/2,\infty) \ \ \text{for all} \ \ n > 0, \ \ \text{and}
\\[1ex]&
\nder{\!\varphi_{2}}by{t}times{n} = 0 \ \ \text{on} \ \ (-\infty,\fracinline1/2] \cup [1,\infty) \ \ \text{for all} \ \ n > 0.
\end{align*}
Since $\varphi{\comp}\mu(u,v)(t) = \mu_{\real}(\varphi{\comp}u,\varphi{\comp}v)(t) = \varphi_{1}(t) + \varphi_{2}(t) - \varphi(\mathbb{b})$, $t \!\in\! \real$ by the equation (\ref{eq:concat-path}), the composition $\varphi{\comp}\mu(u,v) = \mu_{\real}(\varphi{\comp}u,\varphi{\comp}v)$ is also a smooth function satisfying 
\begin{equation*}
\nder{(\varphi{\comp}\mu(u,v))}by{t}times{n} = 0 \ \ \text{on} \ \ (-\infty,0] \cup \{\fracinline1/2\} \cup [1,\infty) \ \ \text{for all} \ \ n > 0.
\end{equation*}
Here, let us recall that $\mu(u,v)$ $:$ $\real \to X$ is smooth on $\real \smallsetminus \{\fracinline1/2\}$ and that there is a $D$-open neighbourhood $U \subset X$ of $\mathbb{b}$ such that $U$ is reflexive, since $X$ is reflexive at the point $\mathbb{b}$. 
Since $\varphi{\comp}\mu(u,v)$ is smooth for any smooth function $\varphi : X \to \real$, we obtain that $\mu(u,v)$ is smooth on the open set $\mu(u,v)^{-1}(U) \ni \fracinline1/2$.
It means that $\mu(u,v)$ is a plot, and $\mu$ is well-defined.

\begin{thm}\label{thm:concatenation}
Let $X$ be a diffeological space and reflexive at $\mathbb{b}$ with $\mathbb{a}, \mathbb{b}, \mathbb{c} \!\in\! X$.
Then the concatenation $\mu=\mu_{X} : \Path{X;\mathbb{a},\mathbb{b}} \times \Path{X;\mathbb{b},\mathbb{c}} \to \Path{X;\mathbb{a},\mathbb{c}}$ is well-defined and {\differentiabletxt}.
\end{thm}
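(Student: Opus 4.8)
The well-definedness half is already secured by the computation immediately preceding the statement: for each pair $(u,v)$ with $u \in \Path{X;\mathbb{a},\mathbb{b}}$ and $v \in \Path{X;\mathbb{b},\mathbb{c}}$, that argument shows $\mu(u,v)$ is a plot with endpoints $\mathbb{a},\mathbb{c}$ satisfying $\mu(u,v){\comp}\pi_{set}=\mu(u,v)$, hence $\mu(u,v) \in \Path{X;\mathbb{a},\mathbb{c}}$. So the plan is to take well-definedness for granted and concentrate on showing that $\mu$ is {\smoothtxt} as a morphism in $\diffeology$. Since $\diffeology$ is cartesian-closed and $\I$ represents $\Path{}$ by Theorem \ref{cor:subduction-dual} (so $\Path{X}=C^{\infty}(\I,X)$, and $\Path{X;\mathbb{a},\mathbb{c}}$ carries the subspace diffeology), a map into $\Path{X;\mathbb{a},\mathbb{c}}$ is a plot iff the associated map into $\Path{X}$ is a plot, iff its adjoint is {\smoothtxt}. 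Thus I would fix an arbitrary plot $P=(P_{1},P_{2}) : W \to \Path{X;\mathbb{a},\mathbb{b}} \times \Path{X;\mathbb{b},\mathbb{c}}$, with adjoints $\widehat{P_{1}},\widehat{P_{2}} : W \times \real \to X$ (smooth, pulling back along the subduction $\id \times \pi$ via Lemmas \ref{lem:subduction-product} and \ref{lem:subduction-universality}), noting that membership in the based path spaces together with $P_{i}(w){\comp}\pi_{set}=P_{i}(w)$ forces $\widehat{P_{1}}(w,s)=\mathbb{b}$ for $s \ge 1$ and $\widehat{P_{2}}(w,s)=\mathbb{b}$ for $s \le 0$, uniformly in $w$. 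It then suffices to prove that the adjoint $Q$ of $\mu{\comp}P$,
\[
Q(w,t)=\mu(P_{1}(w),P_{2}(w))(t)=\begin{cases}\widehat{P_{1}}(w,2t),& t \le \fracinline1/2,\\ \widehat{P_{2}}(w,2t{-}1),& t \ge \fracinline1/2,\end{cases}
\]
is {\smoothtxt} on $W \times \real$.

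Off the seam $W \times \{\fracinline1/2\}$ this is immediate: on $\{t<\fracinline1/2\}$ we have $Q(w,t)=\widehat{P_{1}}(w,2t)$ and on $\{t>\fracinline1/2\}$ we have $Q(w,t)=\widehat{P_{2}}(w,2t{-}1)$, each a composition of {\smoothtxt} maps. By the locality axiom (D3) it therefore remains only to show that $Q$ is a plot on a neighbourhood of each point of $W \times \{\fracinline1/2\}$. This is the crux, and it is exactly where reflexivity at $\mathbb{b}$ enters. Since $Q(w,\fracinline1/2)=\mathbb{b}$ for every $w$ and $Q$ is continuous (the two one-sided values along the seam both equal $\widehat{P_{1}}(w,1)=\mathbb{b}=\widehat{P_{2}}(w,0)$), the set $Q^{-1}(U)$ is a $D$-open neighbourhood of $W \times \{\fracinline1/2\}$, where $U \ni \mathbb{b}$ is the reflexive $D$-open neighbourhood furnished by Definition \ref{defn:reflexivesmoothCW}. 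For any $\psi \in C^{\infty}(U,\real)$ I would then write, on $Q^{-1}(U)$,
\[
\psi{\comp}Q = A_{1}+A_{2}-\psi(\mathbb{b}),\qquad A_{1}(w,t)=\psi(\widehat{P_{1}}(w,2t)),\quad A_{2}(w,t)=\psi(\widehat{P_{2}}(w,2t{-}1)),
\]
the identity being valid because $\widehat{P_{1}}(w,s)=\mathbb{b}$ for $s \ge 1$ forces $A_{1}\equiv\psi(\mathbb{b})$ on $\{t \ge \fracinline1/2\}$, while $\widehat{P_{2}}(w,s)=\mathbb{b}$ for $s \le 0$ forces $A_{2}\equiv\psi(\mathbb{b})$ on $\{t \le \fracinline1/2\}$. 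As $A_{1}$ and $A_{2}$ are {\smoothtxt} on all of $W \times \real$, so is $\psi{\comp}Q$. Since this holds for every $\psi \in C^{\infty}(U,\real)$ and $U$ is reflexive, $Q$ restricted to $Q^{-1}(U)$ is a plot of $U$, hence of $X$; combined with the smoothness off the seam and axiom (D3), $Q$ is {\smoothtxt}, so $\mu{\comp}P$ is a plot and $\mu$ is {\smoothtxt}.

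The main obstacle is precisely the gluing across $W \times \{\fracinline1/2\}$: the two {\smoothtxt} pieces need not assemble into a {\smoothtxt} map into $X$, and one cannot simply match derivatives to all orders because $X$ is only a diffeological space. The decomposition $\psi{\comp}Q=A_{1}+A_{2}-\psi(\mathbb{b})$ is what circumvents this, and its success hinges on two points I would emphasise: the constancy conditions $\widehat{P_{1}}(w,s)=\mathbb{b}$ ($s \ge 1$) and $\widehat{P_{2}}(w,s)=\mathbb{b}$ ($s \le 0$) hold \emph{uniformly in the plot parameter} $w$, which is exactly the content of $P_{1}(w),P_{2}(w)$ lying in the based path spaces; and a reflexive neighbourhood of $\mathbb{b}$ is available, which is what upgrades smoothness of all the real-valued composites $\psi{\comp}Q$ to smoothness of $Q$ itself. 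This is the faithful parametrised analogue of the single-pair argument given before the theorem.
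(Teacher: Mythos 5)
Your proposal is correct and follows essentially the same route as the paper: the same decomposition $\psi{\comp}Q=A_{1}+A_{2}-\psi(\mathbb{b})$ (the paper's $\Phi_{1}+\Phi_{2}-\varphi{\comp}k(\mathbb{b})$), exploiting that each summand is a globally defined composition of smooth maps because the paths are stationary at $\mathbb{b}$ past the seam, followed by the same appeal to a reflexive $D$-open neighbourhood of $\mathbb{b}$ to upgrade smoothness of the real-valued composites to smoothness of $Q$ near $W\times\{\fracinline1/2\}$ and then to conclude via locality. The only caveat is the phrase ``$A_{1}$ and $A_{2}$ are smooth on all of $W\times\real$'': since $\psi$ is only defined on $U$, they are defined and smooth only on $Q^{-1}(U)$, which is all the argument needs.
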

\begin{proof}
For any two plots $P : \real^{k} \supset U \to \Path{X;\mathbb{a},\mathbb{b}}$, $Q : \real^{\ell} \supset V \to \Path{X;\mathbb{b},\mathbb{c}}$ and any $\varphi \in \mathcal{F}(X)$, the adjoints $\widehat{P} : \real \times U \to X$ and $\widehat{Q} : \real \times V \to X$ of $P$ and $Q$ are smooth, and hence $\varphi{\comp}\widehat{P} : \real \times U \to \real$ and $\varphi{\comp}\widehat{Q} : \real \times V \to \real$ are {\smoothtxt} functions.
Let $\widehat{(P{\cdot}Q)} : \real \times U \times V \to X$ be the adjoint map of $P{\cdot}Q := \mu{\comp}(P \times Q) : U \times V \to \Path{X;\mathbb{a},\mathbb{b}} \times \Path{X;\mathbb{b},\mathbb{c}} \to \Path{X;\mathbb{a},\mathbb{c}}$. 
The function $\varphi{\comp}\widehat{(P{\cdot}Q)}$ can be described as
\begin{equation}
\varphi{\comp}\widehat{(P{\cdot}Q)}(t,\mathbb{x},\mathbb{y}) = \begin{cases}\,
\varphi{\comp}\widehat{P}(2t,\mathbb{x}), &t\le\fracinline1/2,
\\[.5ex]\,
\varphi{\comp}\widehat{Q}(2t{-}1,\mathbb{y}), &t\ge\fracinline1/2,
\end{cases}\quad (t,\mathbb{x},\mathbb{y}) \in \real \times U \times V.
\tag{B}\label{eq:concat-plot}
\end{equation}
Since the adjoint map $\widehat{P}$ of $P$ is constant on $(-\infty,0] \times U$ and on $[1,\infty) \times U$, so is $\varphi{\comp}\widehat{P}$, and we obtain the following equation.
\begin{align*}
\pder{(\varphi{\comp}\widehat{P})}by{t} = \pder{(\varphi{\comp}\widehat{P})}by{x_{i}} = 0 \ \ \text{on} \ \ ((-\infty,0] \cup [1,\infty)) \times U \ \text{for} \ 1 \!\le\! i \!\le\! k.
\end{align*}
Similarly for the adjoint map $\widehat{Q}$ of $Q$, we obtain the following equation.
\begin{align*}
\pder{(\varphi{\comp}\widehat{Q})}by{t} = \pder{(\varphi{\comp}\widehat{Q})}by{y_{j}} = 0 \ \ \text{on} \ \ ((-\infty,0] \cup [1,\infty)) \times V \ \text{for} \ 1 \!\le\! j \!\le\! \ell.
\end{align*}
Hence by iterating partial differentials, we obtain
\begin{align*}&
\frac{\partial^{n+|I|}(\varphi{\comp}\widehat{P})}{\pdiff{t}^{n}\pdiff{\mathbb{x}}^{I}} = 0 \ \text{on} \ ((-\infty,0] \cup [1,\infty)) \times U \ \text{for} \ (n,I) \in \numeric \times \numeric^{k}, \ n\!+\!|I|>0,
\\[1ex]&
\frac{\partial^{n+|J|}(\varphi{\comp}\widehat{Q})}{\pdiff{t}^{n}\pdiff{\mathbb{y}}^{J}} = 0 \ \text{on} \ ((-\infty,0] \cup [1,\infty)) \times V \ \text{for} \ (n,J) \in \numeric \times \numeric^{\ell}, \ \text{if $n+|J|>0$,} 
\end{align*}
where $|I|= i_{1}+\cdots+i_{k}$ and $\partial\mathbb{x}^{I}=\partial{x}_{1}^{i_{1}}\cdots\partial{x}_{k}^{i_{k}}$ for $I\!=\!(i_{1},\ldots,i_{k})$, and $|J|=j_{1}+\cdots+j_{\ell}$ and $\partial\mathbb{y}^{J}=\partial{y}_{1}^{j_{1}}\cdots\partial{y}_{\ell}^{j_{\ell}}$ for $J\!=\!(j_{1},\ldots,j_{\ell})$.
Let $\Phi_{1}=\varphi{\comp}\mu(\widehat{P},k(\mathbb{b}))=\mu_{\real}(\varphi{\comp}\widehat{P},k(\varphi(\mathbb{b})))$ and $\Phi_{2}=\mu_{\real}(k(\varphi(\mathbb{b})),\varphi{\comp}\widehat{Q})$.
Then we obtain the following equations.\vspace{.5ex}
\begin{align*}&
\frac{\partial^{n+|I|+|J|}\Phi_{1}}{\pdiff{t}^{n}\pdiff{\mathbb{x}}^{I}\pdiff{\mathbb{y}}^{J}} = 0 \ \text{on} \ ((-\infty,0] \cup [\fracinline1/2,\infty)) \times U \ \ \text{and}
\\[1ex]&
\frac{\partial^{n+|I|+|J|}\Phi_{2}}{\pdiff{t}^{n}\pdiff{\mathbb{x}}^{I}\pdiff{\mathbb{y}}^{J}} = 0 \ \text{on} \ ((-\infty,\fracinline1/2] \cup [1,\infty)) \times V
\end{align*}
for $(n,I,J) \in \numeric \times \numeric^{k} \times \numeric^{\ell}\!$, \,$n+|I|+|J|>0$.
Since $\varphi{\comp}\widehat{(P{\cdot}Q)}(t,\mathbb{x},\mathbb{y})$ $=$ $\mu_{\real}(\varphi{\comp}\widehat{P},\varphi{\comp}\widehat{Q})(t,\mathbb{x},\mathbb{y})$ $=$ $\Phi_{1}(t,\mathbb{x},\mathbb{y}) + \Phi_{2}(t,\mathbb{x},\mathbb{y}) - \varphi{\comp}k(\mathbb{b})$, $(t,\mathbb{x},\mathbb{y}) \!\in\! \real \times U \times V$ by the equation (\ref{eq:concat-plot}), the composition $\varphi{\comp}\widehat{(P{\cdot}Q)}$ is also a smooth function satisfying 
\begin{equation*}
\frac{\partial^{n+|I|+|J|}(\varphi{\comp}\widehat{(P{\cdot}Q)})}{\pdiff{t}^{n}\pdiff{\mathbb{x}}^{I}\pdiff{\mathbb{y}}^{J}} = 0 \ \text{on} \ ((-\infty,0] \cup \{\fracinline1/2\} \cup [1,\infty)) \times U \times V 
\end{equation*}
for $(n,I,J) \in \numeric \times \numeric^{k} \times \numeric^{\ell}$, $n+|I|+|J|>0$.
Therefore, by the reflexivity at a point $\mathbb{b} \!\in\! X$, we can deduce, using a similar argument given to show the well-definedness of $\mu(u,v)$, that the map $\widehat{(P{\cdot}Q)} : \real \times U \times V \to X$ is a plot, which means $\mu{\comp}(P \times Q) : U \times V \to \Path{X}$ is a plot, and hence $\mu$ is {\differentiabletxt}.
\end{proof}

There also is an obvious smooth homotopy $\mu{\comp}(\iota \times_{X} \id) \sim \id \sim \mu{\comp}(\id \times_{X} \iota)$.

\begin{cor}
If $X$ is reflexive, then $\mu : \Path{X} \times_{X} \Path{X} \to \Path{X}$ is smooth, and hence $(\Path{X},X)$ is an internal H-category in \diffeology{}.
If $X$ is reflexive at the base point $\mathbb{\ast}$, then the concatenation of $\Loop{X}=\Path{X;\mathbb{\ast},\mathbb{\ast}}$ is smooth.
\end{cor}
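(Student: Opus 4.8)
The plan is to read off both assertions from Theorem~\ref{thm:concatenation} and the homotopy $\mu\comp(\iota\times_{X}\id)\sim\id\sim\mu\comp(\id\times_{X}\iota)$ displayed just above, the only real work lying in upgrading the fixed-endpoint theorem to the full fibre product for the first assertion. The second assertion is immediate: specialising Theorem~\ref{thm:concatenation} to $\mathbb{a}=\mathbb{b}=\mathbb{c}=\mathbb{\ast}$, the hypothesis that $X$ be reflexive at $\mathbb{\ast}=\mathbb{b}$ is precisely what is assumed, so $\mu:\Loop{X}\times\Loop{X}=\Path{X;\mathbb{\ast},\mathbb{\ast}}\times\Path{X;\mathbb{\ast},\mathbb{\ast}}\to\Path{X;\mathbb{\ast},\mathbb{\ast}}=\Loop{X}$ is smooth with nothing further to check.

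For the first assertion I first note that reflexivity of $X$ gives reflexivity at every point, since $U=X$ serves as a $D$-open reflexive neighbourhood in Definition~\ref{defn:reflexivesmoothCW}. The difficulty is that a plot $R:W\to\Path{X}\times_{X}\Path{X}$ is a pair of plots $P,Q:W\to\Path{X}$ whose junction value $\mathbb{w}\mapsto\widehat{P}(1,\mathbb{w})=\widehat{Q}(0,\mathbb{w})$ varies with $\mathbb{w}$, so the fixed-endpoint theorem cannot be quoted verbatim. I would therefore re-run the computation in the proof of Theorem~\ref{thm:concatenation}, replacing the constant $\varphi(\mathbb{b})$ by the smooth function $\mathbb{w}\mapsto\varphi\comp\widehat{P}(1,\mathbb{w})$: for each $\varphi\in\mathcal{F}(X)$ set
$$
\Phi_{1}(t,\mathbb{w})=\begin{cases}\varphi\comp\widehat{P}(2t,\mathbb{w}),&t\le 1/2,\\ \varphi\comp\widehat{P}(1,\mathbb{w}),&t\ge 1/2,\end{cases}
\qquad
\Phi_{2}(t,\mathbb{w})=\begin{cases}\varphi\comp\widehat{Q}(0,\mathbb{w}),&t\le 1/2,\\ \varphi\comp\widehat{Q}(2t{-}1,\mathbb{w}),&t\ge 1/2.\end{cases}
$$
Using the parametrised flatness estimates already established in the proof of Theorem~\ref{thm:concatenation} (all $t$- and $\mathbb{w}$-derivatives of $\varphi\comp\widehat{P}$ and $\varphi\comp\widehat{Q}$ vanish on the endpoint slabs) one checks that $\Phi_{1}$ and $\Phi_{2}$ are smooth, and the matching condition $\widehat{P}(1,\mathbb{w})=\widehat{Q}(0,\mathbb{w})$ yields the identity $\varphi\comp\widehat{(P{\cdot}Q)}=\Phi_{1}+\Phi_{2}-\varphi\comp\widehat{P}(1,\,\cdot\,)$, whose right-hand side is patently smooth. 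Since this holds for every $\varphi\in C^{\infty}(X,\real)$, global reflexivity $\mathcal{D}(X)=\mathcal{D}'(X)$ forces $\widehat{(P{\cdot}Q)}$ to be a plot; hence $\mu\comp R$ is a plot and $\mu:\Path{X}\times_{X}\Path{X}\to\Path{X}$ is smooth. Combined with the homotopy unit recorded above, this exhibits $(\Path{X},X)$ as an internal H-category.

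The step I expect to be the main obstacle is the $C^{\infty}$-gluing of $\Phi_{1}$ (and symmetrically $\Phi_{2}$) at the junction $t=1/2$ now that the junction value is allowed to move. In Theorem~\ref{thm:concatenation} the term $\varphi(\mathbb{b})$ is constant, so its parameter-derivatives vanish automatically; here $\varphi\comp\widehat{P}(1,\,\cdot\,)$ is a genuine function of $\mathbb{w}$, and one must verify that every mixed derivative involving at least one $\partial/\partial t$ still vanishes at $t=1/2$. This is exactly where the flatness of $\partial_{s}(\varphi\comp\widehat{P})$ on the slab $\{s\ge 1\}$ enters: on that region the function vanishes identically in both $s$ and $\mathbb{w}$, so all its further $\mathbb{w}$-derivatives vanish at $s=1$ as well, matching the identically-zero $t$-derivatives coming from the constant branch $t\ge 1/2$ and guaranteeing smoothness across the seam.
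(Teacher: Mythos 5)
Your proof is correct. For the loop--space assertion and for the H-category conclusion it coincides with what the paper does: the corollary is stated there without further argument, as an immediate consequence of Theorem~\ref{thm:concatenation} together with the displayed homotopy $\mu{\comp}(\iota\times_{X}\id)\sim\id\sim\mu{\comp}(\id\times_{X}\iota)$. For the first assertion you go a little beyond the paper, and rightly so: a plot of $\Path{X}\times_{X}\Path{X}$ is a pair $(P,Q)$ whose junction point $\widehat{P}(1,\mathbb{w})=\widehat{Q}(0,\mathbb{w})$ varies with the parameter, so Theorem~\ref{thm:concatenation}, which fixes $\mathbb{a},\mathbb{b},\mathbb{c}$, does not apply verbatim; your re-run of its computation with the constant $\varphi(\mathbb{b})$ replaced by $\varphi{\comp}\widehat{P}(1,\cdot)$ is exactly the uniform version needed, the identity $\varphi{\comp}\widehat{(P{\cdot}Q)}=\Phi_{1}+\Phi_{2}-\varphi{\comp}\widehat{P}(1,\cdot)$ checks out on both branches, and global reflexivity (which does give reflexivity at every point, with $U=X$ in Definition~\ref{defn:reflexivesmoothCW}) then promotes $\widehat{(P{\cdot}Q)}$ to a plot. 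One simplification you missed: the seam at $t=\fracinline1/2$ that you single out as the main obstacle is in fact a non-issue, because each $P(\mathbb{w})\in\Path{X}$ is constant on $[1,\infty)$, so $\varphi{\comp}\widehat{P}(2t,\mathbb{w})=\varphi{\comp}\widehat{P}(1,\mathbb{w})$ for all $t\ge\fracinline1/2$ and your two branches of $\Phi_{1}$ are restrictions of the single globally smooth composite $(t,\mathbb{w})\mapsto\varphi{\comp}\widehat{P}(2t,\mathbb{w})$ (likewise for $\Phi_{2}$); no flatness estimates are needed to glue them, and indeed none are needed anywhere in your argument, since all you use in the end is smoothness of the sum rather than the vanishing of its derivatives at the seam.
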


\begin{expl}
Let $X$ be a manifold, or a smooth CW complex of finite dimension whose base point is in the top cell.
Then the concatenation of $\Loop{X}$ is smooth.
\end{expl}

\begin{thm}\label{thm:reflexive-h-unital}
If $X$ is a reflexive diffeological space, then $(\Path{X},X)$ is an internal smooth h-unital $A_{\infty}$-category in \diffeology{}.
\end{thm}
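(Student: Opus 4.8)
The plan is to build the $A_{\infty}$-form $\{M(n)\}_{n\ge 2}$ by induction on $n$, using the cone decomposition $|\mathbb{K}(n)| = \bigcup_{(k,r,s)\in A(n)} |\mathbb{L}_{k}(r,s)| \join \{\mathbb{b}_{n}\}$ recorded in the inductive construction of $\mathbb{K}(n)$, together with the smoothness of the concatenation $\mu$ from Theorem~\ref{thm:concatenation}. The organising principle is that, for every $\xi \in |\mathbb{K}(n)|$, the path $M(n)(\xi;g_{1},\dots,g_{n})$ should run through $g_{1},\dots,g_{n}$ in this order; condition~(1) forces exactly this, since on a face the inner factor $M(s)(\sigma;g_{k},\dots,g_{k+s-1})$ is, by the inductive hypothesis, an ordered reparametrised concatenation of $g_{k},\dots,g_{k+s-1}$, and substituting it into $M(r)$ unfolds to an ordered concatenation of all of $g_{1},\dots,g_{n}$. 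Consequently $M(n)$ will be governed by a smooth family, parametrised by $|\mathbb{K}(n)|$, of \emph{concatenation recipes} — a subdivision of $\I$ into $n$ consecutive subintervals together with, on each, a suitably normalised reparametrisation that is flat at its endpoints, built from the smooth function $\lambda$ introduced after Theorem~\ref{cor:subduction-dual} — and the non-linearity of $X$ never enters, because all interpolation takes place in the convex space of such recipes rather than in $X$ itself. I set $M(1)=\id_{\Path{X}}$ and, for the base case, $M(2)(\mathbb{b}_{2};g_{1},g_{2})=\mu(g_{1},g_{2})$, which is smooth by Theorem~\ref{thm:concatenation} since global reflexivity supplies reflexivity at every endpoint value; condition~(1) is vacuous here because $A(2)=\emptyset$.

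For the inductive step, assume $M(m)$ has been constructed, is smooth, and satisfies (0) and~(1) for all $2 \le m < n$. First I would define $M(n)$ on the boundary $\partial|\mathbb{K}(n)| = \bigcup_{(k,r,s)\in A(n)} L_{k}(r,s)$ by imposing condition~(1) face by face, that is $M(n)(\partial_{k}(\rho,\sigma);g_{1},\dots,g_{n}) = M(r)(\rho;g_{1},\dots,M(s)(\sigma;g_{k},\dots,g_{k+s-1}),\dots,g_{n})$, through the smooth cubic maps $\partial_{k} : \mathbb{K}(r)\times\mathbb{K}(s)\to\mathbb{L}_{k}(r,s)$. The compatibility of these prescriptions on the lower-dimensional overlaps $L_{k}(r,s)\cap L_{k'}(r',s')$ is precisely Stasheff's higher-associativity coherence \cite{MR0158400}, which holds here because the lower forms $M(m)$ already obey~(1); this is the same combinatorial check underlying the classical associahedron. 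Smoothness of the resulting map on $\partial|\mathbb{K}(n)|\times\Path{X}^{n}$ is then tested one face at a time: since $\diffeology{}$ is cartesian closed, $\partial|\mathbb{K}(n)|\times\Path{X}^{n}$ is the colimit of the pieces $L_{k}(r,s)\times\Path{X}^{n}$, and by Lemma~\ref{lem:subduction-universality} a map out of a subduction is smooth once its composite with the subduction is, so it suffices that on each face $M(n)$ is the composite of the smooth maps $\partial_{k}$, $M(r)$, $M(s)$ and the fibre-product structure maps.

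Next I would extend $M(n)$ over the cone. Writing a point of $|\mathbb{K}(n)|$ as $(1{-}t)\cdot x + t\cdot\mathbb{b}_{n}$ with $x\in\partial|\mathbb{K}(n)|$ and $t\in\I$, I would fix the apex recipe at $\mathbb{b}_{n}$ to be the balanced subdivision (each $g_{i}$ occupying $[\tfrac{i-1}{n},\tfrac{i}{n}]$, reparametrised flat at the dividers by $\lambda$), take the boundary recipe at $x$ to be the one already produced, and interpolate the division points and the endpoint-flat reparametrisations linearly in $t$, all within the convex space of recipes. Applying the interpolated recipe to $g_{1},\dots,g_{n}$ then defines $M(n)((1{-}t)\cdot x + t\cdot\mathbb{b}_{n};g_{1},\dots,g_{n})$. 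It is essential to carry the recipe itself as the primary datum, rather than to try to recover a reparametrisation from the output path (which is impossible when some $g_{i}$ is non-injective); doing so makes the extension manifestly well defined independently of the face containing $x$, as it refers only to the face-independent apex recipe and to the boundary value, and it restricts at $t=0$ to the boundary prescription, so condition~(1) holds by construction.

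The main obstacle is the uniform smoothness of $M(n)$ as a plot-valued map at the internal junctions, where one factor $g_{i}$ is handed to the next $g_{i+1}$ and where, as $\xi$ ranges over $|\mathbb{K}(n)|$, the junction positions move and may even collide. I would dispatch this exactly as in the proof of Theorem~\ref{thm:concatenation}: assembling every reparametrisation from $\lambda$ forces all relevant derivatives to vanish on both sides of each junction, the smooth injection $\hat\lambda_{\mathbb{K}(n)} : |\mathbb{K}(n)|\to\real^{n}$ renders the dependence of the recipe on $\xi$ smooth, and the reflexivity of $X$ at each junction value then promotes the piecewise definition to a genuine plot across the junctions (a collapsed subinterval merely yields a path dwelling at a point, which is still smooth). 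Granting smoothness, condition~(0) is immediate, an order-preserving reparametrised concatenation beginning at $g_{1}(0)=\sigma(g_{1})$ and ending at $g_{n}(1)=\tau(g_{n})$, while condition~(2') of Definition~\ref{defn:internal-higher-associativity'} involves only $M(2)$ and $M(1)$ and reduces to the smooth homotopies $\mu\comp(\iota\times_{X}\id) \sim \id \sim \mu\comp(\id\times_{X}\iota)$ already noted after Theorem~\ref{thm:concatenation}. This completes the inductive construction and exhibits $(\Path{X},X)$ as an internal smooth h-unital $A_{\infty}$-category.
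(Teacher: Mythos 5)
Your proposal is correct and follows essentially the same route as the paper: the paper makes your ``convex space of concatenation recipes'' explicit as the open simplex $E_{n}=\{(r_{1},\dots,r_{n}) \mid r_{i}>0,\ \textstyle\sum_{i} r_{i}=1\}$ with substitution maps $\partial^{E}_{k}$, defines $\phi_{n}:|\mathbb{K}(n)|\to E_{n}$ on faces by $\phi_{n}{\comp}\partial_{k}=\partial^{E}_{k}{\comp}(\phi_{r}\times\phi_{s})$ and cones off to the barycentre $\mathbb{e}_{n}$ using star-shapedness, then sets $M_{n}=\beta_{n}{\comp}\phi_{n}$ where $\beta_{n}$ is the $n$-fold reparametrised concatenation, whose smoothness is exactly the reflexivity argument of Theorem \ref{thm:concatenation}. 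The only cosmetic differences are that the paper uses plain linear reparametrisations on each subinterval (elements of $\Path{X}$ are already stationary outside $[0,1]$, so your extra $\lambda$-flattening is unnecessary and would in fact complicate the claim that the recipe space is closed under substitution), and that factoring everything through $E_{n}$ makes the face-compatibility you attribute to Stasheff's coherence hold automatically.
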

\begin{proof}
Firstly, we introduce the space of concatenations as follows:
$$
E_{n} = \{\, (r_{1},r_{2},\ldots,r_{n}) \in \real^{n-1} \midvert 0 < r_{i} \ (1 \!\le\! i \!\le\! n), \ r_{1}+r_{2}+\cdots+r_{n} = 1 \,\}
$$
which is a convex open set in an affine space of dimension $n{-}1$:
$$
H^{n-1} : x_{1}+x_{2}+\cdots+x_{n}=1.
$$
Let $\Path{X}^{n} = \Path{X} \times_{X} \cdots \times_{X} \Path{X}$.
Then we define $\widehat\beta_{n} : E_{n} \times \Path{X}^{n} \hooklongrightarrow \Path{X}$ as
$$
\widehat\beta_{n}(r_{1},\dots,r_{n};u_{1},\ldots,u_{n-1})=v \in \Path{X},
$$
which is defined by
$$
v(t)=\begin{cases}
u_{1}(\frac{t}{r_{1}}),&t \!\le\! v_{1},
\\[1.5ex]
u_{i}(\frac{t{-}v_{i-1}}{r_{i}}),&v_{i-1} \!\le\! t \!\le\! v_{i}, \ 1 \!<\! i \!<\! n,
\\[1.5ex]
u_{n}(\frac{t{-}v_{n-1}}{r_{n}}),&v_{n-1} \!\le\! t,
\end{cases}\quad \text{$t \in \real$,}
$$
where $v_{i}=r_{1}+\cdots+r_{i}$, $1 \!\le\! i \!\le\! n$.
It is not very hard to show that $\widehat\beta_{n}$ is well-defined and also smooth by using a similar arguments to the proof of Theorem \ref{thm:concatenation}, and we leave it to the reader.
Thus by taking adjoint, we obtain a smooth map $\beta_{n} : E_{n} \to C^{\infty}(\Path{X}^{n},\Path{X})$.

Secondly, let $\partial^{E}_{k} : E_{r} \times E_{s} \to E_{n}$, $r{+}s=n{+}1$, be the smooth map defined by
$$\partial^{E}_{k}(x_{1},\dots,x_{r};y_{1},\dots,y_{s})=(x_{1},\dots,x_{k-1},x_{k}{\cdot}y_{1},\dots,x_{k}{\cdot}y_{s},x_{k+1},\dots,x_{r}).
$$
Let $\mathbb{e}_{n} = \frac1n(1,1,\ldots,1) \in E_{n}$.
Then we define $\phi_{n} : K_{n} \to E_{n}$ inductively by 
\par\vskip1ex\noindent
\begin{enumerate*}
\item $\phi_{n} : K_{n} \ni \mathbb{b}_{n} \mapsto \mathbb{e}_{n} \in E_{n}$, and
\hitem $\phi_{n}{\comp}\partial_{k} = \partial^{E}_{k}{\comp}(\phi_{r} \times \phi_{s})$.
\end{enumerate*}%
\par\vskip1ex\noindent
By (1), $\phi_{2} : K_{2} \!=\! \{(0,1)\} \to \{\mathbb{e}_{2}\} \subset E_{2}$ is the trivial map, since $\mathbb{b}_{2}\!=\!(0,1)$.
(2) determines a smooth map $\widehat\phi_{k,r,s} : L_{k}(r,s) \to E_{n}$, since $\partial_{k} : K_{r} \times K_{s} \to L_{k}(r,s)$ is a diffeomorphism.
There is a smooth extension $\phi_{k,r,s} : L_{k}(r,s) \join \{\mathbb{b}_{n}\} \to E_{n}$ of \,$\widehat\phi_{k,r,s}$, $(k,r,s) \in A(n)$, such that $\phi_{k,r,s}(\mathbb{b}_{n}) = \mathbb{e}_{n}$, since $E_{n}$ is star-shaped w.r.t.\ $\mathbb{e}_{n} \!\in\! E_{n}$.
As $|\mathbb{K}(n)|$ is the colimit of $|\mathbb{L}_{k}(r,s) \join \{\mathbb{b}_{n}\}|$ and the identity map $|\mathbb{L}_{k}(r,s) \join \{\mathbb{b}_{n}\}| \to L_{k}(r,s) \join \{\mathbb{b}_{n}\}$ is a smooth bijection, smooth maps $\phi_{k,r,s}$, $(k,r,s) \in A(n)$, give a smooth map $\phi_{n} : |\mathbb{K}(n)| \to E_{n}$. 

Finally, smooth maps $M_{n}=\beta_{n}{\comp}\phi_{n}$ satisfy (0), (1) and (2') in Definition \ref{defn:internal-higher-associativity}, which determines a smooth $A_{\infty}$-form on $(\Path{X},X)$, and hence $(\Path{X},X)$ is an internal smooth $A_{\infty}$-category in \diffeology{}.
\end{proof}

\begin{cor}\label{cor:reflexive-h-unital-loop}
If a diffeological space $X$ is reflexive at the base point,
then the diffeological loop space $\Loop{X}$ is a smooth h-unital $A_{\infty}$-space in \diffeology{}.
\end{cor}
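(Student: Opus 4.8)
The plan is to specialise the construction carried out in the proof of Theorem \ref{thm:reflexive-h-unital} to the loop space $\Loop{X} = \Path{X;\mathbb{\ast},\mathbb{\ast}}$, observing that for loops one never needs reflexivity away from the base point. Indeed, a tuple $(u_{1},\dots,u_{n}) \in (\Loop{X})^{n}$ is automatically composable as an element of $\Path{X}^{n}$, since every loop has both endpoints at $\mathbb{\ast}$; moreover each elementary concatenation performed inside $\widehat\beta_{n}$ joins two paths precisely at the point $\mathbb{\ast}$. Hence Theorem \ref{thm:concatenation}, which requires reflexivity only at the joining point, applies here with $\mathbb{b}=\mathbb{\ast}$, so that the sole hypothesis actually used is reflexivity of $X$ at the base point.

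Concretely, first I would define $\widehat\beta_{n} : E_{n} \times (\Loop{X})^{n} \to \Loop{X}$ by exactly the formula of Theorem \ref{thm:reflexive-h-unital}; its well-definedness and smoothness follow from the argument already given there, with the point $\mathbb{b}$ replaced throughout by $\mathbb{\ast}$, so that only reflexivity at $\mathbb{\ast}$ is invoked when checking that $\varphi{\comp}\widehat{(P{\cdot}Q)}$ is smooth across each junction. Taking adjoints yields smooth maps $\beta_{n} : E_{n} \to C^{\infty}((\Loop{X})^{n},\Loop{X})$, and with the same star-shaped contractions defining $\phi_{n} : |\mathbb{K}(n)| \to E_{n}$ one sets $M(n)=\beta_{n}{\comp}\phi_{n} : |\mathbb{K}(n)| \times (\Loop{X})^{n} \to \Loop{X}$ as the $A_{\infty}$-form.

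It then remains to verify conditions (1) and (2') of Definition \ref{defn:stasheff70} for the h-unital $A_{\infty}$-space $\Loop{X}$ with unit the constant loop $e=\iota_{X}(\mathbb{\ast})$. Condition (1) follows from the identity $\phi_{n}{\comp}\partial_{k} = \partial^{E}_{k}{\comp}(\phi_{r} \times \phi_{s})$ together with the evident associativity of the reparametrised concatenation, exactly as in the proof of Theorem \ref{thm:reflexive-h-unital}; since all source and target maps of $\Path{X}$ collapse to $\mathbb{\ast}$ on $\Loop{X}$, the fibre over $\mathbb{\ast}$ carries a genuine $A_{\infty}$-space structure rather than merely an internal one. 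Condition (2') is supplied by restricting the smooth homotopy $\mu{\comp}(\iota \times_{X} \id) \sim \id \sim \mu{\comp}(\id \times_{X} \iota)$, noted after Theorem \ref{thm:concatenation}, to loops based at $\mathbb{\ast}$.

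The main obstacle, and the only point deserving genuine care, is the reduction of the reflexivity hypothesis: one must check that after restricting to $\Loop{X}$ every joining point occurring in the iterated concatenations $\widehat\beta_{n}$ is the base point $\mathbb{\ast}$, so that reflexivity of $X$ at $\mathbb{\ast}$ alone replaces the global reflexivity used in Theorem \ref{thm:reflexive-h-unital}. Once this is pinned down the remaining verifications are a routine transcription of that proof.
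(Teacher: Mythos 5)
Your proposal is correct and follows essentially the same route as the paper, which derives the corollary by specialising the construction of Theorem \ref{thm:reflexive-h-unital} to $\Loop{X}=\Path{X;\mathbb{\ast},\mathbb{\ast}}$, where every junction in $\widehat\beta_{n}$ occurs at the base point so that only reflexivity at $\mathbb{\ast}$ (as in Theorem \ref{thm:concatenation}) is needed. Your observations that the fibred products collapse to genuine products over $\Loop{X}$ and that (2') comes from the homotopy $\mu{\comp}(\iota \times_{X} \id) \sim \id \sim \mu{\comp}(\id \times_{X} \iota)$ match the intended argument.
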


\begin{cor}\label{cor:CW-h-unital-loop}
Let $X$ be a manifold, or a smooth CW complex of finite dimension with base point in the interior of the top cell.
Then $\Loop{X}$ is a smooth h-unital $A_{\infty}$-space in \diffeology{}.
\end{cor}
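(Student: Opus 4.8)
The plan is to derive this statement as a direct specialisation of Corollary~\ref{cor:reflexive-h-unital-loop}, whose sole hypothesis is that $X$ be reflexive at its base point in the sense of Definition~\ref{defn:reflexivesmoothCW}. So the entire task reduces to checking that this pointwise reflexivity holds in each of the two stated cases; once that is done, the conclusion follows by invoking the corollary verbatim.

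First I would handle the manifold case. By Theorem~\ref{thm:reflexivesmoothCW} a manifold $X$ is reflexive as a diffeological space, so $\mathcal{D}(X)=\mathcal{D}'(X)$ globally. Choosing the $D$-open neighbourhood $U=X$ in Definition~\ref{defn:reflexivesmoothCW} then exhibits $X$ as reflexive at every point, in particular at the base point, and Corollary~\ref{cor:reflexive-h-unital-loop} applies at once.

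Next I would treat a finite-dimensional smooth CW complex $X$ whose base point lies in the interior of the top cell. Here I would observe that the open interior of the top cell is $D$-open in $X$ and carries the diffeology of an open subset of a Euclidean space, hence is a manifold and therefore reflexive by Theorem~\ref{thm:reflexivesmoothCW}; this open set supplies the witnessing neighbourhood $U$ demanded by Definition~\ref{defn:reflexivesmoothCW}. Thus $X$ is reflexive at every interior point of the top cell---exactly the content of Example~\ref{example:reflexive}---and in particular at the base point, so again Corollary~\ref{cor:reflexive-h-unital-loop} yields that $\Loop{X}$ is a smooth h-unital $A_{\infty}$-space in \diffeology{}.

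Since essentially all the substance has already been carried by Theorem~\ref{thm:reflexive-h-unital} and its corollary, I do not expect a genuine obstacle here. The only point to watch is the bookkeeping translation between the ``reflexive at an interior point of a top cell'' phrasing of Example~\ref{example:reflexive} and the ``reflexive at the base point'' hypothesis of Corollary~\ref{cor:reflexive-h-unital-loop}; this is settled as soon as one notes that the open interior itself is the reflexive neighbourhood required by Definition~\ref{defn:reflexivesmoothCW}, and the base point is assumed to lie in it.
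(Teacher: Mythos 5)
Your proposal is correct and follows exactly the route the paper intends: the corollary is an immediate specialisation of Corollary~\ref{cor:reflexive-h-unital-loop}, with reflexivity at the base point supplied by Theorem~\ref{thm:reflexivesmoothCW} in the manifold case and by Example~\ref{example:reflexive} in the CW case. Nothing further is needed.
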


\appendix

\section{Path space with stable concatenation}

In this section, we modify a concatenation to obtain a smooth $A_{\infty}$-form on $\Path{X}$ without assuming reflexivity on $X$. 
The internal precategory $\Path{X}$ is equipped with the following stable concatenation for a fixed small $\varepsilon\!>\!0$.
\begin{align*}&
\mu_{\varepsilon} : \Path{X} \times_{X} \Path{X} \to \Path{X}
\\[-1ex]&\qquad\qquad
\iff \mu_{\varepsilon}(u,v)(t) = 
\begin{cases}
\,u(\frac{2t}{1-\varepsilon}),& t < \frac{1+\varepsilon}2,
\\[2ex]
\,v(\frac{2t-1-\varepsilon}{1-\varepsilon}),&t > \frac{1-\varepsilon}2,
\end{cases} \ \ t \in \real.
\end{align*}\vskip.5ex\noindent
Then $\mu_{\varepsilon}$ is stable when $\frac{1-\varepsilon}2 \!\le\! t \!\le\! \frac{1+\varepsilon}2$, and is smooth for any diffeological space $X$. 

\begin{thm}\label{thm:homotopy-unital-loop}
$(\Path{X},X)$ is an internal h-unital $A_{\infty}$-category in \diffeology{}.
\end{thm}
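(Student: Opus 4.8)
The plan is to follow the proof of Theorem~\ref{thm:reflexive-h-unital} almost line by line, since reflexivity entered that argument at exactly one point — the smoothness of the iterated concatenation at its junctions — and the stable concatenation $\mu_{\varepsilon}$ is designed precisely to remove that point. Recall the ingredients of that proof: the open simplex of speeds $E_{n}$, a multi-concatenation $\beta_{n} : E_{n} \to C^{\infty}(\Path{X}^{n},\Path{X})$ obtained as the adjoint of a map $\widehat\beta_{n} : E_{n}\times\Path{X}^{n}\to\Path{X}$, the face maps $\partial^{E}_{k} : E_{r}\times E_{s}\to E_{n}$, the barycentre $\mathbb{e}_{n}\in E_{n}$, and a map $\phi_{n} : |\mathbb{K}(n)|\to E_{n}$ built inductively from the star-shapedness of $E_{n}$ about $\mathbb{e}_{n}$ and satisfying $\phi_{n}{\comp}\partial_{k}=\partial^{E}_{k}{\comp}(\phi_{r}\times\phi_{s})$; the $A_{\infty}$-form was $M_{n}=\beta_{n}{\comp}\phi_{n}$. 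The maps $\partial^{E}_{k}$, $\mathbb{e}_{n}$ and $\phi_{n}$ are purely convex-geometric and carry over verbatim; only $\widehat\beta_{n}$ must be rebuilt from $\mu_{\varepsilon}$ for a fixed small $\varepsilon>0$.

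First I would define a stable multi-concatenation $\widehat\beta_{n}$ on the model of $\mu_{\varepsilon}$: writing $v_{i}=r_{1}+\cdots+r_{i}$, reparametrise each $u_{i}$ so that it is active only in the interior of its slot $[v_{i-1},v_{i}]$ and frozen near both endpoints, so that on a genuine neighbourhood of every junction $v_{i}$ the output is literally the constant map at $u_{i}(1)=u_{i+1}(0)$. Since a constant parametrisation is a plot, and since away from the junctions the map is a smooth reparametrisation of a single $u_{i}$, the resulting $\widehat\beta_{n}$ is {\differentiabletxt} for \emph{every} diffeological space $X$, with no appeal to reflexivity: the plot-variable estimates are exactly those of Theorem~\ref{thm:concatenation}, but the junction analysis that there required a reflexive neighbourhood of $\mathbb{b}$ is now vacuous, because the output is locally constant there. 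Passing to adjoints gives the smooth $\beta_{n} : E_{n}\to C^{\infty}(\Path{X}^{n},\Path{X})$. Setting $M_{n}=\beta_{n}{\comp}\phi_{n}$, I would then verify the three clauses of Definition~\ref{defn:internal-higher-associativity'}: clause~(0) reads off the endpoint formulas for $\widehat\beta_{n}$; clause~(1) collapses, through $\phi_{n}{\comp}\partial_{k}=\partial^{E}_{k}{\comp}(\phi_{r}\times\phi_{s})$, to the single operadic identity $\widehat\beta_{n}(\partial^{E}_{k}(x;y);-)=\widehat\beta_{r}(x;\dots,\widehat\beta_{s}(y;-),\dots)$ in the speed variables; and clause~(2') asks only for a homotopy and follows from the smooth homotopy $\mu_{\varepsilon}{\comp}(\iota\times_{X}\id)\homotopic\id\homotopic\mu_{\varepsilon}{\comp}(\id\times_{X}\iota)$ together with the degeneracies $s_{j}$.

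The hard part will be clause~(1): forcing that operadic identity to hold \emph{on the nose}. The tension is that $\partial^{E}_{k}$ rescales the inner speeds $y$ by the outer speed $x_{k}$, so the outer concatenation rescales the entire inner block — frozen collars and all — by $x_{k}$, whereas the directly-built $\widehat\beta_{n}(\partial^{E}_{k}(x;y);-)$ assigns to the first inner slot the width $x_{k}y_{1}$ rather than $x_{k}$; consequently no choice of collar width that depends only on the adjacent slots (whether a fixed fraction of the slot or a fixed absolute size) can make the two sides agree at a block-boundary junction. I would resolve this by realising the whole family as $\widehat\beta_{n}=(\text{naive fill-each-slot concatenation})$ post-composed with a family $\{\theta_{r}\}$ of stabilising self-maps of $\real$ whose plateaux sit at the junctions $v_{i}$: the naive concatenation is strictly operadic exactly as in Theorem~\ref{thm:reflexive-h-unital}, so clause~(1) reduces to a cocycle-type compatibility of the $\{\theta_{r}\}$ under $\partial^{E}_{k}$, and the crux of the proof is to build $\{\theta_{r}\}$ satisfying it while keeping each $\theta_{r}$ constant near every junction. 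Granting such a family, smoothness (the point of $\mu_{\varepsilon}$) and strict associativity drop out together, and the remaining checks are the routine bookkeeping of Theorem~\ref{thm:concatenation}.
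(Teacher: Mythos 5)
You have correctly isolated the one genuinely delicate point --- making clause (1) hold on the nose when the face map rescales an inner block by $x_{k}$ --- but your proposal stops exactly there: you reduce the theorem to the existence of a family $\{\theta_{r}\}$ of stabilising reparametrisations satisfying a cocycle condition under $\partial^{E}_{k}$, observe (correctly) that no collar width depending only on the adjacent slots can satisfy it, and then write ``granting such a family.'' That family is never constructed, and it is the entire content of the difficulty; as written this is a gap, not a proof. Keeping the parameter space equal to the open simplex $E_{n}$ of speeds forces the collar widths to be functions of the speed vector alone, and the functional equation those functions must satisfy (inner collars scaled by $x_{k}$ on one side, unscaled on the other, consistently across all ways of associating a triple composition) is precisely the obstruction you identified --- you have not shown it has a solution.

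The paper dissolves the obstruction instead of solving it: it enlarges the parameter space to
$D_{n}=\{\,(r_{1},\dots,r_{n};\varepsilon_{1},\dots,\varepsilon_{n-1})\in(0,1)^{2n-1}\mid r_{1}+\cdots+r_{n}+\varepsilon_{1}+\cdots+\varepsilon_{n-1}=1\,\}$,
so the collar widths $\varepsilon_{i}$ are independent coordinates rather than functions of the speeds, and defines the face map $\partial^{D}_{k}:D_{r}\times D_{s}\to D_{n}$ to multiply the inner collars $\eta_{1},\dots,\eta_{s-1}$ by $x_{k}$ along with the inner speeds. The whole inner block --- slots and collars together --- is then an affine copy of the inner configuration, and the strict operadic identity is immediate. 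Moreover $\widehat\alpha_{n}$ is not built by post-composing a naive concatenation with a reparametrisation: it is defined directly on the overlapping open intervals $(s_{i-1},\,s_{i}{+}\varepsilon_{i})$, on whose overlaps the two competing formulas are both the constant $u_{i}(1)=u_{i+1}(0)$, so well-definedness and smoothness for arbitrary $X$ come for free. Since $D_{n}$ is still a convex open subset of an affine space, it is star-shaped about $\mathbb{d}_{n}=\frac{1}{3n-1}(2,\dots,2;1,\dots,1)$ and your $\phi_{n}$-argument goes through verbatim to produce $\psi_{n}:|\mathbb{K}(n)|\to D_{n}$; everything else in your outline (clauses (0) and (2'), the use of $s_{j}$ and the unit homotopy) is correct. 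If you want to salvage your own route, the honest fix is the same move in disguise: let the plateau widths be extra parameters rather than trying to derive them from the speeds.
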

\begin{proof}
Firstly, we introduce the space of stable concatenations as follows:
$$D_{n}=\{\,(r_{1},\dots,r_{n};\varepsilon_{1},\dots,\varepsilon_{n-1}) \!\in\! (0,1)^{2n-1} \midvert r_{1}+\cdots{+}r_{n}+\varepsilon_{1}+\cdots+\varepsilon_{n-1}=1\,\}$$
which is a convex open set in the following affine space of dimension $2n{-}2$: 
$$
H^{2n-2} : x_{1}+\cdots+x_{n}+x_{n+1}+\cdots+x_{2n-1}=1.
$$
We define
$\widehat\alpha_{n} : D_{n} \times \Path{X}^{n} \to \Path{X}$ as
\begin{align*}&
\widehat\alpha_{n}(r_{1},\dots,r_{n};\varepsilon_{1},\dots,\varepsilon_{n-1},r_{n};u_{1},\dots,u_{n})=v \in \Path{X},
\end{align*}
which is defined by
\begin{align*}
v(r) = 
\begin{cases}\,
u_{1}(\frac{r}{r_{1}}),&r \!<\! s_{1}{+}\varepsilon_{1},
\\[1.5ex]\,
u_{i}(\frac{r{-}s_{i-1}{-}\varepsilon_{i-1}}{r_{i}}), &s_{i-1} \!<\! r \!<\! s_{i}{+}\varepsilon_{i}, \ 1 \!<\! i \!<\! n,
\\[1.5ex]\,
u_{n}(\frac{r{-}s_{n-1}{-}\varepsilon_{n-1}}{r_{n}}).&s_{n-1} \!<\! r,
\end{cases}\quad \text{$r \in \real$,}
\end{align*}
where $s_{i}=r_{1}+\cdots+r_{i}+\varepsilon_{1}+\cdots+\varepsilon_{i-1}$, $1 \!\le\! i \!\le\! n$, and hence $s_{i}\!-\!s_{i-1}\!-\!\varepsilon_{i-1}=r_{i}$, $1 \!\le\! i \!\le\! n$. 
Since the open sets $(-\infty,s_{1}{+}\varepsilon_{1}) \supset (-\infty,s_{1}]$, $(s_{i-1},s_{i}{+}\varepsilon_{i}) \supset (s_{i-1},s_{i}]$, $1 \!<\! i \!<\! n$ and $(s_{n-1},\infty)$ cover entire $\real$, $\widehat\alpha_{n}$ is well-defined by definition and is also smooth.
Thus by taking adjoint, we obtain a smooth map $\alpha_{n} : D_{n} \to C^{\infty}(\Path{X}^{n},\Path{X})$.

Secondly, let $\partial^{D}_{k} : D_{r} \times D_{s} \to D_{n}$, $r{+}s=n{+}1$, be the smooth map defined by
\begin{align*}
&\partial^{D}_{k}(x_{1},\dots,x_{r};\varepsilon_{1},\dots,\varepsilon_{r-1};y_{1},\dots,y_{s};\eta_{1},\dots,\eta_{r-1})
\\&\quad
=(x_{1},\dots,x_{k-1},x_{k}{\cdot}y_{1},\dots,x_{k}{\cdot}y_{s},x_{k+1},\dots,x_{r};\varepsilon_{1},\dots,\varepsilon_{k-1};x_{k}{\cdot}\eta_{1},\dots,x_{k}{\cdot}\eta_{s-1};\varepsilon_{k},\dots,\varepsilon_{r-1}).
\end{align*}
Let $\mathbb{d}_{n}=\frac1{3n-1}(2,\dots,2;1,\dots,1) \in D_{n}$.
Then we define $\psi_{n} : K_{n} \to D_{n}$ inductively by 
\par\vskip.5ex\noindent
\begin{enumerate*}
\item $\psi_{n} : K_{n} \ni \mathbb{b}_{n} \mapsto \mathbb{d}_{n} \in D_{n}$\quad and  
\hitem $\psi_{n}{\comp}\partial_{k} = \partial^{D}_{k}{\comp}(\psi_{r} \times \psi_{s})$.
\end{enumerate*}%
\par\vskip.5ex\noindent
By (1), $\psi_{2} : K_{2} \!=\! \{(0,1)\} \to \{\mathbb{d}_{2}\} \subset D_{2}$ is the trivial map, since $\mathbb{b}_{2}\!=\!(0,1)$.
(2) determines a smooth map $\widehat\psi_{k,r,s} : L_{k}(r,s) \to D_{n}$, since $\partial_{k} : K_{r} \times K_{s} \to L_{k}(r,s)$ is a diffeomorphism.
There is a smooth extension $\psi_{k,r,s} : L_{k}(r,s) \join \{\mathbb{b}_{n}\} \to D_{n}$ of \,$\widehat\psi_{k,r,s}$, $(k,r,s) \in A(n)$, such that $\psi_{k,r,s}(\mathbb{b}_{n}) = \mathbb{d}_{n}$, since $D_{n}$ is star-shaped w.r.t.\ $\mathbb{d}_{n} \!\in\! D_{n}$. 
Since $|\mathbb{K}(n)|$ is the colimit of $|\mathbb{L}_{k}(r,s) \join \{\mathbb{b}_{n}\}|$ and the identity map $|\mathbb{L}_{k}(r,s) \join \{\mathbb{b}_{n}\}|$ $\to$ $L_{k}(r,s) \join \{\mathbb{b}_{n}\}$ is a smooth bijection, smooth maps $\psi_{k,r,s}$, $(k,r,s) \in A(n)$, give rise to a smooth map $\psi_{n} : |\mathbb{K}(n)| \to D_{n}$.

Finally, smooth maps $M_{n}=\alpha_{n}{\comp}\psi_{n}$ satisfy (0), (1) and (2') in Definition \ref{defn:internal-higher-associativity}, which determines a smooth $A_{\infty}$-form on $(\Path{X},X)$, and hence $(\Path{X},X)$ is an internal smooth $A_{\infty}$-category in \diffeology{}.
\end{proof}

\begin{cor}\label{cor:homotopy-unital-loop}
$\Loop{X}$ is a smooth h-unital $A_{\infty}$-space in \diffeology{}.
\end{cor}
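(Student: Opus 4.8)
The plan is to obtain the $A_{\infty}$-form on $\Loop{X}$ by restricting the internal $A_{\infty}$-category structure on $(\Path{X},X)$ supplied by Theorem \ref{thm:homotopy-unital-loop}---which carries no reflexivity hypothesis on $X$---to the endomorphism space of the basepoint. Fix the basepoint $\ast \in X$ and recall that $\Loop{X} = \Path{X;\ast,\ast}$ is precisely the diffeological subspace of $\Path{X}$ on which both structure maps $\sigma$ and $\tau$ take the value $\ast$; the unit of the resulting $A_{\infty}$-space will be the constant loop $e = \iota(\ast) \in \Loop{X}$.

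First I would match the diffeologies. The ordinary $n$-fold product $\Loop{X}^n$ embeds into the fibre product $\Path{X}^n = \Path{X} \times_{X} \cdots \times_{X} \Path{X}$, since any tuple of loops based at $\ast$ is automatically composable: $\tau(u_i) = \ast = \sigma(u_{i+1})$. As $\Loop{X} \hookrightarrow \Path{X}$ is an induction and a product of inductions is an induction, the inclusion $\Loop{X}^n \hookrightarrow \Path{X}^{\times n}$ is an induction; since it factors through the induction $\Path{X}^n \hookrightarrow \Path{X}^{\times n}$, the product diffeology on $\Loop{X}^n$ coincides with the subspace diffeology inherited from $\Path{X}^n$.

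Next I would restrict the smooth maps $M(n) = \alpha_n{\comp}\psi_n : |\mathbb{K}(n)| \times \Path{X}^n \to \Path{X}$ to $|\mathbb{K}(n)| \times \Loop{X}^n$. Condition (0) of Definition \ref{defn:internal-higher-associativity} gives $\sigma{\comp}M(n)(\tau;g_1,\ldots,g_n) = \sigma(g_1) = \ast$ and $\tau{\comp}M(n)(\tau;g_1,\ldots,g_n) = \tau(g_n) = \ast$ whenever every $g_i \in \Loop{X}$, so each restricted map lands in $\Loop{X}$ and defines a smooth map $|\mathbb{K}(n)| \times \Loop{X}^n \to \Loop{X}$. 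The associativity axiom (1) of Definition \ref{defn:stasheff70} is then the pointwise content of condition (1) of Definition \ref{defn:internal-higher-associativity}, which remains valid on loops because each inner application of $M(s)$ is again a loop at $\ast$ by condition (0). Finally, specialising the internal homotopy-unit condition (2') of Definition \ref{defn:internal-higher-associativity'} to $e = \iota(\ast)$ yields the smooth homotopies $M(2)(e,g) \simeq g \simeq M(2)(g,e)$ demanded for h-unitality.

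The argument is essentially bookkeeping once the second step is settled; the only point requiring genuine care is that identification of the product diffeology on $\Loop{X}^n$ with the basepoint fibre of $\Path{X}^n$, after which smoothness of all restricted structure maps and of the unit homotopies is inherited directly from Theorem \ref{thm:homotopy-unital-loop}.
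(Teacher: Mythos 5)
Your proposal is correct and follows exactly the route the paper intends: the corollary is stated without proof precisely because it is the restriction of the internal h-unital $A_{\infty}$-structure of Theorem \ref{thm:homotopy-unital-loop} to the basepoint fibre, with condition (0) guaranteeing that each $M(n)$ restricts to a map into $\Loop{X}$ and conditions (1) and (2') restricting verbatim. Your added care about identifying the product diffeology on $\Loop{X}^{n}$ with the subspace diffeology inherited from $\Path{X}^{n}$ is a detail the paper leaves implicit, but it is sound.
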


\section*{Acknowledgements}
This research was supported by Grant-in-Aid for Scientific Research (S) \#17H06128 and Exploratory Research \#18K18713 from Japan Society for the Promotion of Science.

%
%

\bibliographystyle{alpha}
\bibliography{2020diff}

\begin{thebibliography}{Wat12}

\bibitem[Che73]{MR380859}
Kuo-Tsai Chen.
\newblock Iterated integrals of differential forms and loop space homology.
\newblock {\em Ann. of Math. (2)}, 97:217--246, 1973.

\bibitem[Che75]{MR377960}
Kuo-Tsai Chen.
\newblock Iterated integrals, fundamental groups and covering spaces.
\newblock {\em Trans. Amer. Math. Soc.}, 206:83--98, 1975.

\bibitem[Che77]{MR454968}
Kuo-Tsai Chen.
\newblock Iterated path integrals.
\newblock {\em Bull. Amer. Math. Soc.}, 83(5):831--879, 1977.

\bibitem[Che86]{MR842915}
Kuo-Tsai Chen.
\newblock On differentiable spaces.
\newblock In {\em Categories in continuum physics ({B}uffalo, {N}.{Y}., 1982)},
  volume 1174 of {\em Lecture Notes in Math.}, pages 38--42. Springer, Berlin,
  1986.

\bibitem[HS20]{Haraguchi:2020wq}
Tadayuki Haraguchi and Kazuhisa Shimakawa.
\newblock A model structure on the category of diffeological spaces, i, 11
  2020.
\newblock arXiv preprint, https://arxiv.org/pdf/2011.12842.pdf.

\bibitem[II19]{MR3991180}
Norio Iwase and Nobuyuki Izumida.
\newblock Mayer-{V}ietoris sequence for differentiable/diffeological spaces.
\newblock In {\em Algebraic topology and related topics}, Trends Math., pages
  123--151. Birkh\"{a}user/Springer, Singapore, 2019.

\bibitem[IM89]{MR1000378}
Norio Iwase and Mamoru Mimura.
\newblock Higher homotopy associativity.
\newblock In {\em Algebraic topology ({A}rcata, {CA}, 1986)}, volume 1370 of
  {\em Lecture Notes in Math.}, pages 193--220. Springer, Berlin, 1989.

\bibitem[Iwa12]{AX12115741}
Norio Iwase.
\newblock Associahedra, multiplihedra and units in {$A_{\infty}$} form, 11
  2012.
\newblock arXiv preprint, https://arxiv.org/pdf/1211.5741.pdf.

\bibitem[IZ13]{MR3025051}
Patrick Iglesias-Zemmour.
\newblock {\em Diffeology}, volume 185 of {\em Mathematical Surveys and
  Monographs}.
\newblock American Mathematical Society, Providence, RI, 2013.

\bibitem[Kih19]{MR3913971}
Hiroshi Kihara.
\newblock Model category of diffeological spaces.
\newblock {\em J. Homotopy Relat. Struct.}, 14(1):51--90, 2019.

\bibitem[KM97]{MR1471480}
Andreas Kriegl and Peter~W. Michor.
\newblock {\em The convenient setting of global analysis}, volume~53 of {\em
  Mathematical Surveys and Monographs}.
\newblock American Mathematical Society, Providence, RI, 1997.

\bibitem[Lod07]{MR2342835}
Jean-Louis Loday.
\newblock Parking functions and triangulation of the associahedron.
\newblock In {\em Categories in algebra, geometry and mathematical physics},
  volume 431 of {\em Contemp. Math.}, pages 327--340. Amer. Math. Soc.,
  Providence, RI, 2007.

\bibitem[Lod12]{MR3221534}
Jean-Louis Loday.
\newblock Dichotomy of the addition of natural numbers.
\newblock In {\em Associahedra, {T}amari lattices and related structures},
  volume 299 of {\em Progr. Math.}, pages 65--79. Birkh\"{a}user/Springer,
  Basel, 2012.

\bibitem[Sou80]{MR607688}
J.-M. Souriau.
\newblock Groupes diff\'{e}rentiels.
\newblock In {\em Differential geometrical methods in mathematical physics
  ({P}roc. {C}onf., {A}ix-en-{P}rovence/{S}alamanca, 1979)}, volume 836 of {\em
  Lecture Notes in Math.}, pages 91--128. Springer, Berlin-New York, 1980.

\bibitem[Sta63]{MR0158400}
James~D. Stasheff.
\newblock Homotopy associativity of {$H$}-spaces. {I}, {II}.
\newblock {\em Trans. Amer. Math. Soc. 108 (1963), 275-292; ibid.},
  108:293--312, 1963.

\bibitem[Sta70]{MR0270372}
James~D. Stasheff.
\newblock {\em {$H$}-spaces from a homotopy point of view}.
\newblock Lecture Notes in Mathematics, Vol. 161. Springer-Verlag, Berlin-New
  York, 1970.

\bibitem[Wat12]{MR3153238}
Jordan Watts.
\newblock {\em Diffeologies, {D}ifferential {S}paces, and {S}ymplectic
  {G}eometry}.
\newblock ProQuest LLC, Ann Arbor, MI, 2012.
\newblock Thesis (Ph.D.)--University of Toronto (Canada).

\end{thebibliography}
\vskip1ex

\end{document}